\newcommand{\eqdef}{\stackrel{\rm def}{=}}
\newcommand{\om}{\Omega}
\newcommand{\R}{\mathbb{R}}
\renewcommand{\a}{\alpha}
\newcommand{\ep}{\varepsilon}
\newcommand{\cv}{{\mathbf c}}
\newcommand{\mL}{{\mathcal L}}
\newcommand{\be}{\begin{eqnarray}}
\newcommand{\ee}{\end{eqnarray}}
\newcommand{\av}{-\hspace{-.16in}\int}
\newcommand{\avsmall}{-\hspace{-.13in}\int}
\newcommand{\esssup}{{\rm ess\, sup}}
\renewcommand{\leq}{\leqslant}
\renewcommand{\geq}{\geqslant}
\newcommand{\supp}{{\rm supp}\,}
\newcommand{\half}{\frac{1}{2}}
\newcommand{\1}{{\mathbf 1}}
\newcommand{\weak}{\rightharpoonup}
\newcommand{\weakstar}{\stackrel{*}{\rightharpoonup}}
\newcommand\dx{\mathrm{d} x}
\newcommand\mA{\mathcal A}
\theoremstyle{remark}\newtheorem{remark}{Remark}
\begin{document}
\title{A nonlinear elasticity model in computer vision}
%
%
\author{John M. Ball
 \and
Christopher L. Horner
%
%
\thanks{ Heriot-Watt University and Maxwell Institute for the Mathematical Sciences, Edinburgh, U.K.}
\\
}

\maketitle              
\begin{abstract}

The purpose of this paper is to analyze a nonlinear elasticity model introduced by the authors in \cite{ballhorner} for comparing two images, regarded as bounded open subsets of $\R^n$ together with associated vector-valued intensity maps. Optimal transformations between the images are sought as minimisers of an integral functional among orientation-preserving homeomorphisms. The existence of minimisers is proved under natural coercivity and polyconvexity conditions, assuming only that the intensity functions are bounded measurable. Variants of the existence theorem are also proved, first under the constraint that  finite sets of landmark points in the two images are mapped one to the other, and second when one image is to be compared to an unknown part of another.

The question is studied as to whether for  images related by an affine mapping the unique minimiser is given by that affine mapping.  For a natural class of functional integrands an example is given guaranteeing that this property holds for pairs of images in which the second is a scaling of the first by a constant factor. However for the property to hold for arbitrary pairs of affinely related images it is shown that the integrand has to depend on the gradient of the transformation as a convex function of its determinant alone. This suggests a new model in which the integrand  depends also on second derivatives of the transformation, and an example is given for which both existence of minimisers is assured and the above property holds for all pairs of affinely related images. 
\end{abstract}
\section{Introduction}
The purpose of this paper is to  analyze a nonlinear elasticity model introduced in \cite{ballhorner} for comparing two images $P_1=(\Omega_1,c_1),\; P_2=(\Omega_2,c_2)$, regarded as bounded Lipschitz domains $\om_1,\om_2$ in $\R^n, n\geq 1$, with corresponding measurable intensity maps $c_1:\om_1\to K, c_2:\om_2\to K$, where $K\subset\R^m$ is bounded, closed and convex. The model is based on an integral functional
\be
\label{0} I_{P_1,P_2}(y)=\int_{\Omega_1}\psi(c_1(x), c_2(y(x)),Dy(x)) \,\dx,
\ee
 depending on $c_1,c_2$ and  a map $y:\om_1\to\om_2$ with  gradient $Dy$, whose minimisers give  optimal transformations  between images. The admissible transformations $y$ between the images are assumed to be orientation-preserving homeomorphisms with $y(\om_1)=\om_2$, and are not required to satisfy other boundary conditions. The integrand $\psi:K\times K\times M^{n\times n}_+\to[0,\infty)$ is assumed to be continuous. Here $M^{n\times n}_+:=\{A\in M^{n\times n}:\det A>0\}$, where $M^{n\times n}$ denotes the set of real $n\times n$ matrices. We study in particular whether $\psi$ can be chosen so that when $P_2$ is  related to $P_1$ by an affine mapping the unique minimiser of 
$I_{P_1,P_2}$ is given by that affine mapping.
 
There are various different approaches to image comparison, for example using optimal transport \cite{solomon2015convolutional},  flow of diffeomorphisms (metamorphosis) \cite{younes} and machine learning. Approaches based on linear elasticity are also often used (see, for example, \cite{modersitzki}). The use of nonlinear elasticity is less common (a fairly complete list of papers is \cite{droskerumpf04,rumpf2013,rumpfwirth,burgeretal2013,debrouxetal20,iglesiasrumpfscherzer,	iglesias21,linetal2010,OzereLeguyader2015,OzereLeguyader2015a,simonetal2017,debroux}) but offers significant advantages over linear elasticity because it allows for large deformations and respects rotational invariance.

Our model is closely related to those of Droske \& Rumpf \cite{droskerumpf04}, Iglesias \cite{iglesias21}, Rumpf \cite{rumpf2013} and Rumpf \& Wirth \cite{rumpfwirth}, and like them (as also done in  \cite{burgeretal2013,debrouxetal20,iglesiasrumpfscherzer}) we adapt the existence theory for polyconvex energies in nonlinear elasticity in \cite{j8} to our situation.  

Our model is briefly reviewed in Section \ref{nle}, in which invariance conditions on $\psi$ from \cite{ballhorner} are recalled. In Section \ref{existence} the existence of an absolute minimiser for \eqref{0} is established (see Theorem \ref{exthm}) under continuity, polyconvexity and coercivity conditions on the integrand $\psi$, assuming only that the intensity functions $c_1,c_2$ are in $L^\infty$. The coercivity condition is weaker than that assumed in \cite{ballhorner} in that, making use of a recent result \cite{ballonnineniwaniec},  we can allow $n^{\rm th}$ power growth of $\psi$ rather than $p^{\rm th}$ power growth for $p>n$. 

Two variants of the existence theorem are given. In the first  (Theorem \ref{thm:landmark}) existence of a minimiser is proved for $n\geq 2$ under the constraint that a finite number of distinct landmark points in $\om_1$ are mapped to given distinct points in $\om_2$. The case when some of the landmark points belong to $\partial\om_1$ and $\partial\om_2$ is discussed. In the second (Theorem \ref{thm:closedexistence})  existence of a minimiser is proved in the set of orientation-preserving homeomorphisms $y:\om_1\to a+A\om_1\subset\om_2$, where $a\in\R^n$ and $A$ belongs to a relatively closed subset of $M^{n\times n}_+$, corresponding to matching a template image $P_1=(\om_1,c_1)$ with part of $P_2=(\om_2,c_2)$  allowing for changes of scale and orientation.

In Section \ref{metricsection}, motivated by the work of Berkels, Effland and Rumpf \cite{BerkelsEfflandRumpf2015}  and   the metamorphosis model of Miller and Younes \cite{MillerYounes}, Trouv\'e and Younes \cite{TrouveYounes}, \cite{TrouveYounesa}, we discuss connections of the minimization problem with image morphing and possible metrics between images having  the same domain $\om=\om_1=\om_2$. In particular we show the existence of a `discrete morphing sequence' between such a pair of images for suitable integrands of the mass-conserving form
\be
\label{masscon}
\psi(c_1,c_2,A)=\Psi(A)+(1+(\det A)^{-1})|c_1- c_2\det A|^2,
\ee
(in which the intensity of magnified images diminishes in proportion to the Jacobian of the deformation gradient) without the need for additional higher-order derivative terms as in \cite{BerkelsEfflandRumpf2015}.

In Section \ref{scaling} we consider the case when two images $P_1=(\om_1,c_1), P_2=(\om_2,c_2)$ are related by an affine transformation $y(x)=Mx+a$, where $M\in M^{n\times n}_+$ and $a\in\R^n$, so that
\be
\label{affineform}
\om_2=M\om_1+a,\;c_2(Mx+a)=c_1(x),
\ee
and ask for which $\psi$ the minimization algorithm delivers this affine transformation as the unique minimiser. We consider $\psi$  of the form
\be
\label{psiforma}
\psi(c_1,c_2,A)=\Psi(A)+(1+\det A)g(c_1,c_2),
\ee
where  $g:\R^m\times\R^m\to[0,\infty)$ is continuous with $g(c_1,c_2)=g(c_2,c_1)$ and $g(c_1,c_2)=0$ if and only if $c_1=c_2$
 We first show that $\Psi$ can be chosen so that $\psi$ satisfies both the invariance conditions in Section \ref{nle} and the hypotheses of Theorem \ref{exthm}, and  such that, under a mild nondegeneracy condition, for any pair of images $(P_1,P_2)$ related by a uniform magnification  the unique minimiser of $I_{P_1,P_2}$ is given by that magnification.  
However, for the functional to deliver as a minimiser the affine transformation between {\it any} affinely related pair of images,  we show (see Theorem \ref{generalM}) that $\Psi(A)=h(\det A)$ for some convex $h$. Theorem \ref{generalM} improves the corresponding statement in \cite{ballhorner} by assuming only that $\Psi$ is continuous and by weakening the regularity requirements on the boundary, and is proved by constructing suitable tangential variations. 

In the mass-conserving case \eqref{masscon} a different scaling question is relevant, whether $\Psi$ can be chosen such that  for all pairs of images $P_1=(\om_1,c_1), P_2=(\om_2,c_2)$ satisfying
\be
\label{masscscaling}
\om_2=M\om_1+a,\; c_2(Mx+a)\det M=c_1(x),
\ee
the unique minimiser is $y(x)=Mx+a$, and we show that the answers are the same as for \eqref{psiforma}.

Theorem \ref{generalM}  implies in particular that in order for the minimisation algorithm to deliver the affine transformation between affinely related images, $\psi$ cannot be coercive, rendering existence problematic. However by adding a suitable dependence on the second derivatives $D^2y$ we can retain this property as well as existence, and this is proved for an example in Theorem \ref{Dtwo}.

We end the paper with some brief remarks in Section \ref{regularitya} on the regularity of minimisers and possible numerical implementations.

\section{Nonlinear elasticity model}
\label{nle}
We represent an image by a pair $P=(\Omega,c)$, where $\Omega\subset\R^n$ is a bounded Lipschitz domain, and $c\in L^\infty(\om,K):=\{d\in L^\infty(\om,\R^m):d(x)\in K \text{ for a.e. }x\in\om\}$, $K$ a nonempty bounded closed convex subset of $\R^m$ (for example $K=[0,1]^m$),  is an {\it intensity map} describing the greyscale intensity ($m=1$), the intensity of colour channels ($m>1$), and possibly other image characteristics.  We seek to  compare two images 
$P_1=(\Omega_1,c_1),\; P_2=(\Omega_2,c_2)$ 
by minimising \eqref{0}  among a suitable class of orientation-preserving homeomorphisms $y:\om_1\to\om_2$.  Apart from the requirement that $y(\Omega_1)=\Omega_2$ we do not impose any other boundary condition, with the aim of better comparing images with important boundary features. 

We  identify images related by a proper rigid transformation through an equivalence relation; for two images $P=(\Omega,c)$ and $P'=(\Omega',c')$ we write $P\sim P'$ if  for some $a\in\R^n$, $R\in SO(n)$
$$\Omega'=E(\Omega),\; c'(E(x))=c(x), \text{ where }E(x)=a+Rx.$$

Natural conditions on the integrand $\psi$ are:\\

\noindent (C1) ({\it Invariance under proper rigid transformations})\\
\mbox{         }\hspace{.5in}If $P_1\sim P_1'$, $P_2\sim P_2'$, with corresponding rigid transformations $E_1(x)=a_1+R_1x$, $E_2(x)=a_2 +R_2x$,
then
\be
\label{1a} 
I_{P_1,P_2}(y)=I_{P_1',P_2'}(E_2\circ y\circ E_1^{-1}).
\ee

 It is shown in \cite{ballhorner} that (C1) is equivalent to the condition that $\psi(c_1,c_2,\cdot)$ is {\it isotropic}, i.e.
\be
\label{3}
 \psi(c_1,c_2,QAR)=\psi(c_1,c_2,A)
 \ee
   for all  $c_1,c_2\in K, A\in M^{n\times n}_+$, and $Q,R \in SO(n)$. (The range $K$ of the intensity maps was not made explicit in \cite{ballhorner}, but this does not affect the arguments.)\\

\noindent (C2)  ({\it Identification  of equivalent images.})\\
\mbox{         }\hspace{.5in}$I_{P_1,P_2}(y)=0$  iff $P_1\sim P_2 $ with corresponding rigid transformation $y$.\vspace{.1in}
 
It is shown in \cite{ballhorner} that, for sufficiently regular $y$,
(C2)  is equivalent to the condition that for $c_1,c_2\in K$
\be
\label{5}
\psi(c_1,c_2,A)=0 \text{ iff }c_1=c_2\text{ and }A\in SO(n).
\ee

\noindent(C3) ({\it Symmetry with respect to interchanging images.}) (See \cite{cachierrey2000,kolourietal2015, iglesias21,iwanieconninen2009}.)
$$I_{P_1,P_2}(y)=I_{P_2,P_1}(y^{-1}).$$ 

This holds for all $P_1,P_2$ (see \cite{ballhorner}) if and only if for $c_1,c_2\in K$
\be
\label{8}\psi(c_1,c_2,A)=\psi(c_2,c_1,A^{-1})\det A.
\ee
 
A class of integrands $\psi$ satisfying (C1)-(C3) is given by
\be
\label{8aa}
\psi(c_1,c_2,A)=\Psi(A)+f(c_1,c_2,\det A),
\ee
where\\ (a) $\Psi:M^{n\times n}_+\to [0,\infty)$ is continuous and isotropic, with $\Psi(A)=\det A \cdot\Psi(A^{-1})$, $\Psi^{-1}(0)=SO(n)$, \\(b) $f:K\times K\times (0,\infty)\to[0,\infty)$ is continuous, with $f(c_1,c_2,\delta)=\delta f(c_2,c_1,\delta^{-1})$, $f(c_1,c_2,1)=0$ iff $c_1=c_2$.

Examples of $f$ satisfying (b) are given by
\be 
\label{8a}f(c_1,c_2,\delta)=(1+\delta)g(c_1,c_2),
\ee
where $g:\R^m\times\R^m\to[0,\infty)$ is continuous with $g(c_1,c_2)=g(c_2,c_1)$ and $g(c_1,c_2)=0$ if and only if $c_1=c_2$, for example $g(c_1,c_2)=|c_1-c_2|^2$, 
and the mass-preserving form (correcting the corresponding expression in \cite{ballhorner})
\be
\label{8b}
f(c_1,c_2,\delta)= (1+\delta^{-1})|c_1-c_2\delta|^2,
\ee
which vanishes for $c_1=c_2\delta$ (see \cite{burgeretal2013}). 
\begin{remark}
\label{convexity}\rm Both forms of $f$ are convex in $\delta>0$, while $f$ given by \eqref{8b} is convex in the triple  $(\delta, c_1,c_2\delta)$ on account of the well-known result that $(\delta, c)\mapsto \frac{|c|^2}{\delta}$ is convex on $(0,\infty)\times\R^m$  (we use this observation in Theorem \ref{discrete} below). 
\end{remark}
Although the conditions (C1)-(C3) are natural and are used in Section \ref{scaling}, they do not form part of the hypotheses for the existence of minimisers in Theorem \ref{exthm} below.\vspace{1in}

\section{Existence of minimisers}
\label{existence}
\subsection{Problem formulation and hypotheses}Throughout we assume that $\om_1,\om_2\subset\R^n$ are bounded homeomorphic Lipschitz domains. 
  For $1\leq p\leq\infty$ define the set of admissible maps ${\mathcal A}_p={\mathcal A}_p(\om_1,\om_2)$ by
\be
{\mathcal A}_p=\{y\in W^{1,p}(\Omega_1,\R^n):\, y:\Omega_1\to \Omega_2 \text{ an orientation-preserving}&\\ &\hspace{-1.6in}\text{homeomorphism},  y^{-1}\in W^{1,p}(\Omega_2,\R^n)\}.\nonumber
\ee
Here, for a bounded domain $\om\subset\R^n$ and $1\leq p<\infty$, $W^{1,p}(\Omega,\R^n)$ is the Sobolev space of maps $y:\om\to\R^n$ that are measurable with respect to $n-$dimensional Lebesgue measure $\mL^n$ and possess a weak derivative $Dy$ such that
$$\|y\|_{1,p}:=\left(\int_\om\left(|y(x)|^p+|Dy(x)|^p\right)\,\dx\right)^\frac{1}{p}<\infty,
$$
and $W^{1,\infty}(\om,\R^n)$ is the set of such maps with $$\|y\|_{1,\infty}:=\esssup_{x\in\om}\left(|y(x)|+|Dy(x)|\right)<\infty.$$
We say that a map $y\in W^{1,p}(\om,\R^n)$ is {\it orientation-preserving} if  $\det Dy(x)>0$ for a.e. $x\in\om$. By a result of Vodop'yanov \& Gol'dstein \cite{vodopyanov79} any $y\in W^{1,n}(\om,\R^n)$ has a representative (which we always choose) that is continuous on $\om$. Furthermore (see e.g. Fonseca \& Gangbo \cite{fonsecagangbo95a})
  $ y$ maps sets of $\mL^n$ measure zero to sets of measure zero (the $N$-{\rm property}) and hence measurable sets to measurable sets, for  any $\mL^n$ measurable subset $E\subset \om$
the {\it multiplicity function} $N(y,\om,z):=\#\{x\in\om:y(x)=z\}$ is a measurable function of $z\in\R^n$, and for any
 $L^\infty$ map $\varphi:\R^n\to\R^s$ the change of variables formula
\be
\label{change}
\int_E\varphi(y(x))\det Dy(x)\,\dx=\int_{R^n}\varphi(z)N( y,E,z)\,{\mathrm d}z
\ee 
holds.  We will make use of the following result  \cite{ballonnineniwaniec}, which is motivated by earlier work of Iwaniec \& Onninen \cite{iwanieconninen2011a}:
\begin{theorem}
\label{W1n}
Let  $y^{(j)}:\om_1\to \om_2$ be a family of orientation-preserving homeomorphisms such that
\\ {\rm (i)} $y^{(j)}\weak y$ in $W^{1,n}(\om_1,\R^n)$,\\
{\rm (ii)} $\sup_j\int_{\om_1}h(\det Dy^{(j)}(x))\,dx<\infty$ for some continuous function $h:(0,\infty)\to[0,\infty)$ with $\lim_{\delta\to \infty}\frac{h(\delta)}{\delta}=\infty$.\\
Then  $y^{(j)}, y$ have continuous extensions $\tilde y^{(j)}, \tilde y$ to  $\bar\om_1$, and  $\tilde y^{(j)}\to \tilde y$ uniformly in $\bar \om_1$.
\end{theorem}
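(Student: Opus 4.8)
My plan is to reduce the statement to a uniform equicontinuity estimate and then to apply the Arzel\`a--Ascoli theorem. Since $\om_1$ is a bounded Lipschitz domain and $y^{(j)}\weak y$ in $W^{1,n}(\om_1,\R^n)$, Rellich's theorem gives $y^{(j)}\to y$ strongly in $L^1(\om_1,\R^n)$, while $y$ has the continuous representative $\tilde y$ on $\om_1$ supplied by Vodop'yanov \& Gol'dstein \cite{vodopyanov79}. It therefore suffices to prove: (A) each $y^{(j)}$ extends to a map $\tilde y^{(j)}\in C(\bar\om_1,\R^n)$; and (B) the family $\{\tilde y^{(j)}\}$ is uniformly equicontinuous on $\bar\om_1$. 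Granting (A) and (B), the maps $\tilde y^{(j)}$ all take values in the fixed compact set $\bar\om_2$, so $\{\tilde y^{(j)}\}$ is precompact in $C(\bar\om_1,\R^n)$; any uniform limit along a subsequence coincides with $y$ a.e.\ in $\om_1$, hence equals $\tilde y$ there, and consequently the whole sequence converges uniformly to a continuous extension $\tilde y$ of $y$ to $\bar\om_1$.

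For the interior estimate I would combine the elementary monotonicity of Sobolev homeomorphisms with Morrey's inequality on spheres; here only hypothesis (i) is used. Fix $x_0$ in a compact subset $K$ of $\om_1$ and pick $R>0$ with $\bar B(x_0,R)\subset\om_1$ for all $x_0\in K$. For a.e.\ $r\in(0,R)$ the restriction of $y^{(j)}$ to $\pd B(x_0,r)$ lies in $W^{1,n}(\pd B(x_0,r),\R^n)$, and, since $\pd B(x_0,r)$ is $(n-1)$-dimensional and $n>n-1$, Morrey's inequality yields
\[
\mathrm{osc}_{\pd B(x_0,r)}\,y^{(j)}\leq C\,r^{1/n}\left(\int_{\pd B(x_0,r)}|Dy^{(j)}|^n\,\mathrm{d}\mathcal H^{n-1}\right)^{1/n};
\]
moreover, since $y^{(j)}$ is a homeomorphism, $y^{(j)}(\bar B(x_0,r))$ is the closure of the bounded component of $\R^n\setminus y^{(j)}(\pd B(x_0,r))$, so its diameter is attained on its boundary and $\mathrm{osc}_{\bar B(x_0,r)}\,y^{(j)}=\mathrm{osc}_{\pd B(x_0,r)}\,y^{(j)}$. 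Given $x_1$ with $\delta:=|x_0-x_1|<R$, decompose the disjoint annuli $A(x_0;2^k\delta,2^{k+1}\delta)$, $k=0,\dots,N-1$, with $2^N\delta\leq R$ and $N\approx\log_2(R/\delta)$: on at least one of them $\int|Dy^{(j)}|^n\leq N^{-1}\sup_j\|Dy^{(j)}\|^n_{L^n(\om_1)}$, and selecting a good sphere $\pd B(x_0,r_*)$ inside it by Fubini (so $r_*>\delta$, hence $x_1\in\bar B(x_0,r_*)$) one arrives, via the two displayed facts, at
\[
|y^{(j)}(x_0)-y^{(j)}(x_1)|\leq C\,\frac{\sup_j\|Dy^{(j)}\|_{L^n(\om_1)}}{(\log(R/|x_0-x_1|))^{1/n}},
\]
a log-type modulus of continuity that is uniform in $j$ and in $x_0\in K$.

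The remaining and harder part is the behaviour near $\pd\om_1$, and it is here that hypothesis (ii) is indispensable. Since $h(\delta)/\delta\to\infty$ as $\delta\to\infty$, the de la Vall\'ee-Poussin criterion shows $\{\det Dy^{(j)}\}$ is equi-integrable on $\om_1$; by the change of variables formula \eqref{change} together with the injectivity and positivity of the Jacobian of $y^{(j)}$, $\mL^n(y^{(j)}(E))=\int_E\det Dy^{(j)}\,\dx$ for every measurable $E\subseteq\om_1$, and hence $\sup_j\mL^n(y^{(j)}(\om_1\cap B(x_0,r)))\to0$ as $r\to0$, uniformly in $x_0\in\pd\om_1$. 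Flattening $\pd\om_1$ near $x_0$ by a bi-Lipschitz chart (available because $\om_1$ is Lipschitz), I would study $U^{(j)}_r:=y^{(j)}(\om_1\cap B(x_0,r))$: it is open, connected, and of $\mL^n$-measure at most some $\varepsilon(r)\to0$; the portion of $\pd U^{(j)}_r$ lying in $\om_2$ is $y^{(j)}(\om_1\cap\pd B(x_0,r))$, whose diameter is at most some $\omega(r)\to0$ by the spherical-cap analogue of the previous estimate (Morrey on $(n-1)$-dimensional caps); and the rest of $\pd U^{(j)}_r$ lies on $\pd\om_2$. Using the Lipschitz geometry of $\om_2$ --- an open set of small measure trapped between $\pd\om_2$ and a continuum of small diameter cannot have large diameter --- one concludes $\sup_j\mathrm{diam}\,U^{(j)}_r\to0$ as $r\to0$, which simultaneously yields the continuous extension $\tilde y^{(j)}$ to $\bar\om_1$ and the equicontinuity of $\{\tilde y^{(j)}\}$ at boundary points, completing (A) and (B).

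I expect this last step to be the main obstacle: turning ``small image volume, small-diameter inner boundary, Lipschitz target'' into ``small image diameter'', uniformly in $j$, must rule out the image of a thin boundary collar stretching into a long thin tongue along $\pd\om_2$, and this is exactly the point at which the superlinear Jacobian bound (ii) cannot be weakened to a mere $L^1$ bound. For this I would follow the ring-domain and condenser-capacity arguments of Iwaniec \& Onninen \cite{iwanieconninen2011a} as developed in \cite{ballonnineniwaniec}.
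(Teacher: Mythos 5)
You should note first that the paper contains no proof of Theorem \ref{W1n} to compare against: it is imported verbatim from \cite{ballonnineniwaniec} (``We will make use of the following result''), so your attempt has to stand on its own. Its interior half does: the $\log^{-1/n}$ modulus of continuity for $W^{1,n}$ homeomorphisms, obtained from the Sobolev embedding on almost every sphere, the monotonicity fact that $y^{(j)}(B(x_0,r))$ is the bounded complementary component of $y^{(j)}(\partial B(x_0,r))$ (so the oscillation on the ball is attained on the sphere), and the dyadic-annulus selection of a good radius, is a correct and standard argument, uniform in $j$ and in $x_0$ on compact subsets of $\Omega_1$, and it indeed uses only hypothesis (i). The identification of the limit and the subsequence argument for uniform convergence of the whole sequence are also fine, as is the observation that (ii) plus de la Vall\'ee Poussin gives $\sup_j \mathcal{L}^n\bigl(y^{(j)}(\Omega_1\cap B(x_0,r))\bigr)\to 0$ uniformly in $x_0$ via the area formula.

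The genuine gap is exactly where you suspect it: the passage from ``small image volume, small oscillation of $y^{(j)}$ on $\Omega_1\cap\partial B(x_0,r)$, Lipschitz target'' to ``small diameter of $y^{(j)}(\Omega_1\cap B(x_0,r))$, uniformly in $j$''. This implication is the real content of the theorem and in your write-up it is only asserted (``one concludes''), not proved. Two concrete obstructions: (a) for a Lipschitz $\Omega_1$ the caps $\Omega_1\cap\partial B(x_0,r)$ need not be connected (a Lipschitz graph may oscillate at scale $r$), so the Morrey estimate controls the oscillation of $y^{(j)}$ on each component but does not produce a single small continuum as the relative boundary of $U^{(j)}_r$ in $\Omega_2$, which your topological claim requires; (b) even with a connected inner boundary, excluding a component of $\Omega_2$ minus that continuum which is long and thin --- a tongue hugging $\partial\Omega_2$ or threading a narrow corridor of $\Omega_2$ --- is a quantitative statement about Lipschitz (uniform) domains that needs an actual argument, and it is precisely here that (ii) has to do its work; with only an $L^1$ bound on the Jacobians the statement fails, so no soft argument can close it. Your remedy is to ``follow the ring-domain and condenser-capacity arguments of \cite{iwanieconninen2011a} as developed in \cite{ballonnineniwaniec}'' --- but that is the very paper the theorem is quoted from, so at its crux the proposal reduces the theorem to itself. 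As an outline it assembles the right ingredients; as a proof it is incomplete at the boundary-equicontinuity step.
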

\begin{corollary}
\label{Apequiv}
For $p\geq n$ every $y\in \mA_p$ has an extension $\tilde y$ to $\bar\om_1$ with $\tilde y:\bar\om_1\to\bar\om_2$ surjective and a homeomorphism, and $\tilde y(\partial\om_1)=\partial\om_2$.
\end{corollary}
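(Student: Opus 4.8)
The plan is to deduce this from Theorem \ref{W1n} applied to the constant sequence $y^{(j)}\equiv y$, to repeat the argument for $y^{-1}$, and then to carry out a short point-set-topology assembly.

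\emph{Continuous extensions.} First I would check the hypotheses of Theorem \ref{W1n} for $y^{(j)}\equiv y$. Since $\om_1$ is bounded and $p\geq n$, $y\in W^{1,n}(\om_1,\R^n)$, so hypothesis (i) holds trivially. For hypothesis (ii), I would note that $Dy\in L^p(\om_1)$ with $p\geq n$ forces $\det Dy\in L^{p/n}(\om_1)\subset L^1(\om_1)$, the determinant being a sum of products of $n$ entries of $Dy$; the de la Vall\'ee--Poussin criterion, applied to the single nonnegative function $\det Dy\in L^1(\om_1)$, then yields a nondecreasing convex $\Phi\colon[0,\infty)\to[0,\infty)$ with $\Phi(t)/t\to\infty$ as $t\to\infty$ and $\int_{\om_1}\Phi(\det Dy(x))\,dx<\infty$, and taking $h(\delta):=\Phi(\delta)+1$ gives a continuous $h\colon(0,\infty)\to(0,\infty)$ with $h(\delta)/\delta\to\infty$ and $\sup_j\int_{\om_1}h(\det Dy^{(j)}(x))\,dx<\infty$. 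Theorem \ref{W1n} then provides a continuous extension $\tilde y\colon\bar\om_1\to\R^n$ of $y$. Next I would observe that $y^{-1}\in\mathcal{A}_p(\om_2,\om_1)$: it is a homeomorphism $\om_2\to\om_1$, lies in $W^{1,p}(\om_2,\R^n)$ with inverse $y\in W^{1,p}(\om_1,\R^n)$, and is orientation-preserving because the a.e.\ chain rule $D(y^{-1})(y(x))Dy(x)=I$ together with $\det Dy>0$ a.e.\ and the $N$-property of $y$ give $\det D(y^{-1})>0$ a.e.\ in $\om_2$; hence the previous step applied to $y^{-1}$ yields a continuous extension $\tilde z\colon\bar\om_2\to\R^n$ of $y^{-1}$.

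\emph{Assembly.} Then I would argue topologically. Continuity of $\tilde y$ and density of $\om_1$ in $\bar\om_1$ give $\tilde y(\bar\om_1)\subseteq\bar\om_2$; since $\tilde y(\bar\om_1)$ is compact, hence closed, and contains $y(\om_1)=\om_2$, it equals $\bar\om_2$, and likewise $\tilde z(\bar\om_2)=\bar\om_1$. The maps $\tilde z\circ\tilde y$ and $\tilde y\circ\tilde z$ are continuous and agree with the identity on the dense sets $\om_1$, $\om_2$, hence are the identity on $\bar\om_1$, $\bar\om_2$; therefore $\tilde y$ is a homeomorphism of $\bar\om_1$ onto $\bar\om_2$ with inverse $\tilde z$. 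Finally, being a bijection that restricts to the bijection $y\colon\om_1\to\om_2$, $\tilde y$ maps $\partial\om_1=\bar\om_1\setminus\om_1$ onto $\bar\om_2\setminus\om_2=\partial\om_2$.

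The only place real content enters is the invocation of Theorem \ref{W1n}, so the one genuine verification is hypothesis (ii), where the superlinear $h$ must be exhibited; I expect the main point needing care to be confirming that $y^{-1}$ is genuinely orientation-preserving (needing the Sobolev chain rule and the $N$-property), so that Theorem \ref{W1n} applies to it as well. The remaining steps are routine topology.
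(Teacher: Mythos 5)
Your proposal is correct and follows essentially the same route as the paper: apply Theorem \ref{W1n} to the constant sequence $y^{(j)}\equiv y$ (with de la Vall\'ee Poussin supplying the superlinear $h$ from $\det Dy\in L^1(\om_1)$), do the same for $\xi=y^{-1}$, and then extend the identity relations $\xi\circ y=\mathrm{id}$, $y\circ\xi=\mathrm{id}$ to the closures by continuity to conclude that $\tilde y$ is a homeomorphism of $\bar\om_1$ onto $\bar\om_2$ with $\tilde y(\partial\om_1)=\partial\om_2$. Your explicit check that $y^{-1}$ is orientation-preserving is a small detail the paper leaves implicit, but it does not change the argument.
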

\begin{proof}
It suffices to consider the case $p=n$, so let $y\in \mA_n$. Then, since $\det A$  is an $n^{\rm th}$ order polynomial in $A$, $\det Dy\in L^1(\om_1)$. Hence there exists a continuous function 	 $h:(0,\infty)\to(0,\infty)$ such that $\lim_{\delta\to \infty}\frac{h(\delta)}{\delta}=\infty$ and $\int_{\om_1} h(\det Dy(x))\,dx<\infty$ (directly, or as a consequence of the de la Vall\'ee Poussin Theorem \cite[p24]{DellacherieMeyer}). Hence, applying Theorem \ref{W1n} to the constant sequence $y^{(j)}:=y$ we deduce that $y$ has a  extension $\tilde y:\bar\om_1\to \om_2$. Similarly $\xi:=y^{-1}$ has a continuous extension $\tilde\xi:\bar\om_2\to\bar\om_1$. Since 
\begin{align*}
\tilde\xi(\tilde y(x))=x,&\;x\in\om_1,\\\tilde y(\xi(z))=z,&\;z\in\om_2,
\end{align*}
we obtain the same relations for $x\in\bar\om_1$ and $z\in\bar\om_2$, from which the injectivity and surjectivity of $\tilde y$ and $\tilde\xi$ follows. Since $\tilde y(\om_1)=y(\om_1)=\om_2$ it follows that $\tilde y(\partial\om_1)=\partial\om_2$.
\end{proof}
In the sequel we drop the tildes and write $\tilde y=y,\,\tilde\xi=\xi$ etc.\\

We make the following technical hypotheses on $\psi$ and $c_1,c_2$.\vspace{.05in}\\
(H1) ({\it Continuity})  $\psi:K\times K\times M^{n\times n}_+\to [0,\infty)$ is continuous,\vspace{.05in}\\
(H2) ({\it Coercivity})  $\psi(c,d,A)\geq C(|A|^n+|A^{-1}|^n\det A+h(\det A)) $\\ for all $c,d\in K, A\in M^{n\times n}_+$, where $C>0$ is a constant and $h:(0,\infty)\to\R$ satisfies $\lim_{\delta\to \infty}\frac{h(\delta)}{\delta}=\infty, \lim_{\delta\to 0+}h(\delta)=\infty$,\vspace{.05in}\\
(H3) ({\it Polyconvexity}) $\psi(c,d,\cdot)$ is polyconvex for each $c,d\in K$, i.e. there is a function $g:K\times K\times \R^{\sigma(n)}\times (0,\infty)\to \R$ with $g(c,d,\cdot)$ convex, such that  
$$\psi(c,d,A)=g(c,d,{\mathbf J}_{n-1}(A), \det A)\text{ for all }c,d\in K, A\in M^{n\times n}_+,$$ where ${\mathbf J}_{n-1}(A)$ is the list of all minors (i.e. subdeterminants) of $A$ of order $\leq n-1$ and $\sigma(n)$ is the number of such minors,\vspace{.05in}\\
(H4) ({\it Bounded measurable intensities})  $c_1\in L^\infty(\Omega_1,K)$, $c_2\in L^\infty(\Omega_2,K)$.\vspace{.05in}

We note that a sufficient condition for (H2) to hold is that for some $p>n$
\be
\label{pnotn}
\psi(c,d,A)\geq C(|A|^p+|A^{-1}|^p\det A)-C_1,
\ee
where $C>0,C_1$ are constants, since then 
\be
\label{blowup}\psi(c,d,A)\geq \frac{C}{2}\left(|A|^n+|A^{-1}|^n\det A+h(\det A)\right),
\ee
where
\be
h(\delta):=n^{\frac{p}{2}}(\delta^{\frac{p}{n}}+ \delta^{1-\frac{p}{n}})-\delta-1-\frac{2C_1}{C}.
\ee
 This follows from the elementary inequality $t^p\geq t^n-1$ for $t\in(0,\infty)$ and the Hadamard inequality $|B|^n\geq n^\frac{n}{2}\det B$ for $B\in M^{n\times n}_+$ applied to $B=A$ and $B=A^{-1}$.

Examples of $\psi$ satisfying (H1)-(H3) as well as conditions (C1)-(C3) are given by \eqref{8aa} with $\Psi$ given by \eqref{isoPsi} below and $f$ given by \eqref{8a} or \eqref{8b}. Note that the polyconvexity condition (H3) is invariant under the interchange of images transformation \eqref{8} (see \cite[Theorem 2.6]{p4} for the case $n=3$).
\subsection{Existence theorem}
\begin{theorem}
\label{exthm}Suppose that  {\rm (H1)-(H4)} hold and that $\mathcal A_n$ is nonempty. Then there exists an absolute minimiser of
$$I_{P_1,P_2}(y)=\int_{\Omega_1}\psi(c_1(x), c_2(y(x)),Dy(x)) \,\dx$$
in $\mathcal A_n$.
\end{theorem}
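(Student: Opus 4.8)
The plan is to use the direct method of the calculus of variations, following the polyconvex existence theory of \cite{j8} as adapted in \cite{ballhorner}, the one genuinely new point being that the uniform convergence which in \cite{ballhorner} came from the compact embedding $W^{1,p}\hookrightarrow C^0$ for $p>n$ is now supplied, at the borderline exponent $n$, by Theorem \ref{W1n}. We may assume $\inf_{\mA_n}I_{P_1,P_2}<\infty$, else every element of $\mA_n$ is a minimiser. I would take a minimising sequence $y^{(j)}\in\mA_n$ with $M:=\sup_j I_{P_1,P_2}(y^{(j)})<\infty$. From (H2) the first term of the bound gives $\int_{\om_1}|Dy^{(j)}|^n\,\dx\le M/C$, so (since $y^{(j)}(\om_1)=\om_2$ is bounded) $\{y^{(j)}\}$ is bounded in $W^{1,n}(\om_1,\R^n)$. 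Writing $\xi^{(j)}:=(y^{(j)})^{-1}$ and using the identity $Dy^{(j)}(x)^{-1}=D\xi^{(j)}(y^{(j)}(x))$ a.e.\ (valid for Sobolev homeomorphisms with $W^{1,n}$ inverse, cf.\ \cite{vodopyanov79,fonsecagangbo95a}) together with the change of variables formula \eqref{change} with $N(y^{(j)},\om_1,\cdot)=\1_{\om_2}$, the second term of the bound in (H2) yields $\int_{\om_2}|D\xi^{(j)}|^n\,{\mathrm d}z\le M/C$, so $\{\xi^{(j)}\}$ is bounded in $W^{1,n}(\om_2,\R^n)$; and the third gives $\int_{\om_1}h(\det Dy^{(j)})\,\dx\le M/C$. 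Passing to a subsequence, $y^{(j)}\weak y$ in $W^{1,n}(\om_1,\R^n)$ and $\xi^{(j)}\weak\xi$ in $W^{1,n}(\om_2,\R^n)$.

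Next I would apply Theorem \ref{W1n} to both sequences. For $y^{(j)}$ this is immediate (after translating $h$ by a constant to make it positive, which is possible since $h$ is bounded below), giving $y^{(j)}\to y$ uniformly on $\bar\om_1$ with $y$ identified with its continuous representative. The same change of variables as above shows $\int_{\om_2}\tilde h(\det D\xi^{(j)})\,{\mathrm d}z\le M/C$ for a continuous $\tilde h:(0,\infty)\to(0,\infty)$ with $\tilde h(s)/s\to\infty$ as $s\to\infty$ --- here one uses precisely the hypothesis $\lim_{\delta\to 0+}h(\delta)=\infty$, since $\tilde h(s)$ behaves like $s\,h(1/s)$ --- so Theorem \ref{W1n} also applies to $\xi^{(j)}$ and $\xi^{(j)}\to\xi$ uniformly on $\bar\om_2$. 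Passing to the limit in $\xi^{(j)}\circ y^{(j)}=\mathrm{id}_{\om_1}$ and $y^{(j)}\circ\xi^{(j)}=\mathrm{id}_{\om_2}$, using Corollary \ref{Apequiv} to control the maps up to the boundary, shows that $y:\bar\om_1\to\bar\om_2$ is a homeomorphism with inverse $\xi$, and by invariance of domain $y(\om_1)=\om_2$. Together with $y\in W^{1,n}(\om_1,\R^n)$, $\xi=y^{-1}\in W^{1,n}(\om_2,\R^n)$ and $\det Dy>0$ a.e.\ (a property of Sobolev homeomorphisms whose inverse also lies in $W^{1,n}$; one can also extract it from the $h$-bound), this gives $y\in\mA_n$.

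The hard part is the lower semicontinuity $I_{P_1,P_2}(y)\le\liminf_j I_{P_1,P_2}(y^{(j)})$, and within it the passage to the limit in the composition $c_2(y^{(j)}(x))$ where $c_2$ is only bounded measurable. From $y^{(j)}\weak y$ in $W^{1,n}$ the minors of $Dy^{(j)}$ of order $k\le n-1$ converge weakly in $L^{n/k}(\om_1)$ to those of $Dy$ and $\det Dy^{(j)}\to\det Dy$ in the sense of distributions; the bound $\int_{\om_1}h(\det Dy^{(j)})\le M/C$ together with the de la Vall\'ee Poussin criterion \cite{DellacherieMeyer} makes $\{\det Dy^{(j)}\}$ uniformly integrable, hence $\det Dy^{(j)}\weak\det Dy$ in $L^1(\om_1)$ --- this is exactly where $n$th power growth, rather than $p>n$, has to be paid for by the $h(\det A)$ term. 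For the intensity term I would show $c_2\circ y^{(j)}\to c_2\circ y$ in $L^1(\om_1)$, hence a.e.\ along a further subsequence: given $\eta>0$, by Luzin's theorem pick a closed $F\subset\om_2$ with $\mL^n(\om_2\setminus F)$ small and a continuous $\tilde c:\bar\om_2\to K$ agreeing with $c_2$ on $F$, and estimate
$$\int_{\om_1}|c_2\circ y^{(j)}-c_2\circ y|\,\dx\le\int_{\om_1}|c_2\circ y^{(j)}-\tilde c\circ y^{(j)}|\,\dx+\int_{\om_1}|\tilde c\circ y^{(j)}-\tilde c\circ y|\,\dx+\int_{\om_1}|\tilde c\circ y-c_2\circ y|\,\dx.$$
The middle term tends to $0$ by uniform convergence, continuity of $\tilde c$ and dominated convergence; the first equals $\int_{\om_2}|c_2-\tilde c|\det D\xi^{(j)}\,{\mathrm d}z\le 2\,\mathrm{diam}\,K\int_{\om_2\setminus F}\det D\xi^{(j)}\,{\mathrm d}z$ by the change of variables formula, and this is small uniformly in $j$ because $\{\det D\xi^{(j)}\}$ is uniformly integrable (again by $\int_{\om_2}\tilde h(\det D\xi^{(j)})\le M/C$ and de la Vall\'ee Poussin); the third term is estimated directly. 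I expect this step --- composing an $L^\infty$ function with the $y^{(j)}$, reduced to uniform integrability of the inverse Jacobians and ultimately to the $\lim_{\delta\to 0+}h(\delta)=\infty$ part of (H2) --- to be the main obstacle.

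Finally, writing $\psi(c,d,A)=g(c,d,{\mathbf J}_{n-1}(A),\det A)$ as in (H3) with $g(c,d,\cdot,\cdot)$ convex, the weak convergence of the minors and of $\det Dy^{(j)}$ in $L^1$, together with $c_1$ fixed and $c_2\circ y^{(j)}\to c_2\circ y$ a.e., is exactly the setting of the standard lower semicontinuity theorem for integral functionals whose integrand is convex in the highest-order variables and continuous in the remaining ones (see \cite{j8} and the polyconvexity discussion there). This gives $I_{P_1,P_2}(y)\le\liminf_j I_{P_1,P_2}(y^{(j)})=\inf_{\mA_n}I_{P_1,P_2}$, so $y$ is an absolute minimiser in $\mA_n$.
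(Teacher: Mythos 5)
Your compactness and identification steps follow the paper's own route: bounds on $y^{(j)}$ and $\xi^{(j)}$ in $W^{1,n}$ from (H2) via the change of variables \eqref{change}, uniform integrability of the Jacobians from the $h$-term (with $H(\delta)=\delta h(\delta^{-1})$ superlinear precisely because $h(\delta)\to\infty$ as $\delta\to 0+$), Theorem \ref{W1n} applied to both sequences, and passage to the limit in $\xi^{(j)}\circ y^{(j)}=\mathrm{id}$, $y^{(j)}\circ\xi^{(j)}=\mathrm{id}$. Your treatment of $c_2\circ y^{(j)}\to c_2\circ y$ is a genuinely different and valid route: the paper (Lemma \ref{c2conv}) identifies the weak* limit distributionally and upgrades to strong $L^2$ convergence via the $|c_2|^2$ trick, whereas you use Luzin/Tietze plus uniform convergence of $y^{(j)}$ and equi-integrability of $\det D\xi^{(j)}$; both rest on the same ingredients and your version is arguably more transparent.

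The genuine gap is in your last step. You invoke ``the standard lower semicontinuity theorem for integrands convex in the highest-order variables and continuous in the remaining ones'', but the hypotheses (H1), (H3) do not put you in that setting: (H3) only asserts that $g(c,d,\cdot)$ is convex for each fixed $(c,d)$, and (H1) gives continuity of $\psi$, i.e.\ of $g$ restricted to the manifold $\{({\mathbf J}_{n-1}(A),\det A)\}$; no joint continuity (or even lower semicontinuity) of the representative $g$ in $(c,d,\zeta)$ is assumed, nor does one follow automatically, and theorems of Ioffe/Ball type require exactly such joint regularity of the integrand in the lower-order and convex variables. This is the point the paper bridges with Lemma \ref{lemma3} (an Eisen-type argument, via Lemma \ref{limsuplem}): the difference $h_j(x)=\psi(c_1(x),c_2(y^{(j)}(x)),Dy^{(j)}(x))-\psi(c_1(x),c_2(y(x)),Dy^{(j)}(x))$ tends to $0$ in measure, using only the continuity of $\psi$ and the equi-coercivity from (H2) to confine $Dy^{(j)}$ to compact subsets of $M^{n\times n}_+$ off small sets; with the intensity argument thus frozen at its limit value, Mazur's lemma applied to the vector of minors, convexity of $g(c_1(x),c_2(y(x)),\cdot)$ at a.e.\ fixed $x$, and Fatou's lemma give \eqref{lsc}. (The paper also proves $\det Dy>0$ a.e.\ at this stage via Fatou and the blow-up of $h$, which you assert but do not really establish.) So either supply this Eisen--Mazur argument, or prove that a jointly lower semicontinuous convex representative $g$ can be chosen --- the latter is not part of the hypotheses and is not automatic --- before appealing to a citation.
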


Before proving Theorem \ref{exthm} we indicate some differences between our hypotheses and those in other papers where existence for nonlinear elasticity based image comparison functionals is proved without the addition of higher-order gradient terms in the integrand. Rumpf \cite{rumpf2013} takes $\om_1=\om_2$ with boundary condition $y(x)=x$ for $x\in\partial\om_1$ (invertibility of $y$ being assured via the global inverse function theorem \cite{j16}), and assumes both a more special additive polyconvex form for $\psi$ and the stronger condition that the intensity maps are continuous outside a singular set. Similar hypotheses are made in Droske \& Rumpf \cite{droskerumpf04} though the form of the registration term is different, and in Iglesias \cite{iglesias21}. In these papers stronger coercivity conditions of the type  \eqref{pnotn} with $p>n$ are assumed. The coercivity condition in \cite{burgeretal2013} involves minors in the spirit of \cite{j8}, but global invertibility is not addressed. Unlike in these papers we carry out a direct minimization in the class of Sobolev homeomorphisms without fixing $y$ on $\partial\om_1$, and avoiding the use of inverse function theorems.

\begin{proof}[Proof of Theorem \ref{exthm}] We suitably adapt the proofs of \cite[Theorem 4.1]{j17}, \cite[Theorem 6.1]{j26}. 
We first note that $I_{P_1,P_2}(y)$ is well defined for $y\in \mA_n$ since    $y^{-1}$ has the $N$-property and hence the function $c_2(y(\cdot))$ is measurable and independent of the representative of $c_2\in L^\infty(\om_1,\R^s)$. Also, by \eqref{change} we have that
\be
\label{inverseform}
I_{P_1,P_2}(y)=\int_{\om_2}\psi(c_1(\xi(z)),c_2(z), D\xi(z)^{-1})\det D\xi(z)\,{\mathrm d}z,
\ee
where $\xi:=y^{-1}$. 

 We may assume that $\inf_{\mA_n}I_{P_1,P_2}(y)<\infty$, since otherwise any $y\in\mA_n$ is a minimiser. 
Let $y^{(j)}$ be a minimizing sequence for $I_{P_1,P_2}(y)$ in $\mathcal A_n$ with corresponding inverses $\xi^{(j)}$. Then by (H2) and the boundedness of $\om_1$, $\om_2$ we have that $y^{(j)}$ is bounded in $W^{1,n}(\om_1,\R^n)$ and $\xi^{(j)}$ is bounded in $W^{1,n}(\om_2,\R^n)$. Also, if $n=1$, from the term in $h$ in (H2) and the de la Vall\'ee Poussin theorem, we have that $\det Dy^{(j)}=y^{(j)}_x$ and $\det D\xi^{(j)}=\xi^{(j)}_z$ are sequentially weakly compact in $L^1(\om_1)$, $L^1(\om_2)$ respectively. Thus we can assume that 
\begin{align*}
y^{(j)}\weak y\text{ in }W^{1,n}(\om_1,\R^n),\;
\xi^{(j)}\weak \xi\text{ in } W^{1,n}(\om_2,\R^n).
\end{align*}
By Corollary \ref{Apequiv} we have that
\begin{align*}
\xi^{(j)}(y^{(j)}(x))&=x, \;x\in\bar\om_1,\\
y^{(j)}(\xi^{(j)}(z))&=z,\;z\in\bar\om_2.
\end{align*}
By (H2) and the representation \eqref{inverseform} 
\begin{align}\sup_j\int_{\om_1}h(\det Dy^{(j)}(x))\,dx&<\infty,\\\sup_j\int_{\om_2}H(\det(D\xi^{(j)}(z))\,{\mathrm d}z&<\infty,\label{Hest}
\end{align} 
with $H(\delta)=h(\delta^{-1})\delta$, and $\lim_{\delta\to\infty}\frac{H(\delta)}{\delta}=\infty$. 
Hence by Theorem \ref{W1n} applied to $y^{(j)}$ and $\xi^{(j)}$ we have that $y^{(j)}\to y$ uniformly in $\om_1$ and $\xi^{(j)}\to\xi$ uniformly in $\om_2$, so that we can pass to the limit to obtain
\begin{align*}\xi(y(x))=x,\; &x\in\bar\om_1,\\y(\xi(z))=z,\;&z\in\bar\om_2.
\end{align*}
Hence $y:\om_1\to\om_2$ is a homeomorphism with inverse $\xi$, and therefore $y\in\mathcal A_n$.

In order to show that $y$ is a minimiser it therefore suffices to prove the lower semicontinuity property
\be
\label{lsc}
I_{P_1,P_2}(y)\leq \liminf_{j\to\infty}I_{P_1,P_2}(y^{(j)}).
\ee

\begin{lemma}
\label{c2conv}
For some subsequence (not relabelled) 
$c_2(y^{(j)}(x))\to c_2(y(x))$ a.e. in $\om_1$.
\end{lemma}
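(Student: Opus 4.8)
The plan is to exploit the uniform convergence $y^{(j)}\to y$ on $\bar\om_1$ (already established from Theorem \ref{W1n}) together with the fact that $c_2$, being in $L^\infty(\om_2,K)$, can be approximated in $L^1$ by continuous functions, and to control the ``bad'' set where $c_2$ fails to be well approximated using the change-of-variables formula \eqref{change} and the uniform bound \eqref{Hest} on $\int_{\om_2}H(\det D\xi^{(j)})$. The subtlety is that $c_2$ is merely measurable, so pointwise values $c_2(y^{(j)}(x))$ and $c_2(y(x))$ are only defined up to the choice of representative; the $N$-property of $\xi^{(j)}$ and $\xi$ (Corollary \ref{Apequiv} and the cited results of Fonseca \& Gangbo) guarantees these compositions are measurable and representative-independent, which is what makes the statement meaningful.

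First I would fix $\eta>0$ and choose a continuous $\varphi:\bar\om_2\to K$ (using density of $C(\bar\om_2)$ in $L^1$, and a truncation/projection onto the convex set $K$ to keep values in $K$) with $\int_{\om_2}|c_2(z)-\varphi(z)|\,\mathrm dz<\eta$. By the change-of-variables formula \eqref{change} applied to $\xi^{(j)}$ (equivalently, $y^{(j)}$ maps $\om_1$ onto $\om_2$ bijectively with $\det Dy^{(j)}>0$ a.e.), one has
\[
\int_{\om_1}|c_2(y^{(j)}(x))-\varphi(y^{(j)}(x))|\det Dy^{(j)}(x)\,\dx=\int_{\om_2}|c_2(z)-\varphi(z)|\,\mathrm dz<\eta,
\]
and similarly with $y^{(j)}$ replaced by $y$. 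To remove the Jacobian weight I would split $\om_1$ according to whether $\det Dy^{(j)}(x)$ is large: on the set where $\det Dy^{(j)}(x)\geq 1$ the weight helps, while the set where it is small has controlled measure, and more importantly the contribution of the region where $\det D\xi^{(j)}(z)$ is large — i.e. where $\det Dy^{(j)}$ is small in the $z$-variable — is uniformly small by \eqref{Hest} and de la Vall\'ee Poussin–type equi-integrability of $\det D\xi^{(j)}$ in $L^1(\om_2)$. This yields that $\int_{\om_1}|c_2(y^{(j)}(x))-\varphi(y^{(j)}(x))|\,\dx$ is small, uniformly in $j$, up to a term going to $0$ with $\eta$.

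Next, since $\varphi$ is continuous on $\bar\om_2$ and $y^{(j)}\to y$ uniformly on $\bar\om_1$, we have $\varphi(y^{(j)}(x))\to\varphi(y(x))$ uniformly on $\bar\om_1$. Combining the three estimates via the triangle inequality,
\[
\limsup_{j\to\infty}\int_{\om_1}|c_2(y^{(j)}(x))-c_2(y(x))|\,\dx
\]
is bounded by a quantity that tends to $0$ as $\eta\to 0$; hence $c_2(y^{(j)}(\cdot))\to c_2(y(\cdot))$ in $L^1(\om_1)$, and a subsequence converges a.e., as claimed. The main obstacle is the one flagged above: passing from the natural $L^1$ estimate \emph{with the Jacobian weight} (which comes for free from \eqref{change}) to an unweighted $L^1$ estimate, which is exactly where the coercivity hypothesis (H2) — through the bound $\sup_j\int_{\om_2}H(\det D\xi^{(j)})<\infty$ with $H(\delta)/\delta\to\infty$, giving equi-integrability of the inverse Jacobians — is essential. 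Care is also needed to ensure the approximant $\varphi$ takes values in $K$ so that, e.g., later uses of (H1) (continuity of $\psi$ on $K\times K\times M^{n\times n}_+$) remain applicable; this is handled by composing with the nearest-point projection onto $K$, which is $1$-Lipschitz and does not increase the $L^1$ error.
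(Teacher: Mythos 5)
Your argument is correct, and it reaches the conclusion by a genuinely different route from the paper. The paper extracts a weak* limit $c$ of $c_2(y^{(j)})$ in $L^\infty(\om_1,\R^m)$, identifies $c=c_2(y)$ by testing against $\varphi\in C_0^\infty(\om_1)$, changing variables via \eqref{change} and invoking the weak continuity of minors to get $\det D\xi^{(j)}\weak\det D\xi$ in $L^1(\om_2)$, and then upgrades weak* to strong $L^2$ convergence by running the same argument on $|c_2(y^{(j)})|^2$ (weak plus norm convergence), finally passing to an a.e.\ convergent subsequence. You instead approximate $c_2$ in $L^1(\om_2)$ by a continuous $K$-valued $\varphi$ (the $1$-Lipschitz projection onto the closed convex $K$ is the right fix to keep the error bounded by ${\rm diam}\,K$), transfer the smallness of $\int_{\om_2}|c_2-\varphi|\,{\mathrm d}z$ to the $x$-variable by \eqref{change} applied to $\xi^{(j)}$, and remove the Jacobian weight by splitting $\om_2$ at the level $\{\det D\xi^{(j)}\leq\Lambda\}$ and using the equi-integrability of $\det D\xi^{(j)}$ coming from \eqref{Hest}; the same splitting with $\det D\xi\in L^1(\om_2)$ handles the term involving the limit map, and uniform convergence $y^{(j)}\to y$ deals with the continuous approximant. (Your first, $x$-variable phrasing of the splitting is looser, but the $z$-variable version you state afterwards is the one that closes the estimate.) The trade-off: your proof needs only equi-integrability of the inverse Jacobians, not the weak continuity of minors, and it yields strong $L^1(\om_1)$ convergence of the whole sequence, with the subsequence needed only for the a.e.\ statement; the paper's proof avoids the density/approximation step and the Jacobian-weight removal, at the price of the weak*-limit identification and the squared-intensity trick. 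Both rest on the same standing facts (the $N$-property making $c_2\circ y^{(j)}$, $c_2\circ y$ well defined, the area formula \eqref{change} for the $W^{1,n}$ homeomorphisms involved, the bound \eqref{Hest}, and the uniform convergence from Theorem \ref{W1n}), so your argument is a legitimate drop-in replacement for Lemma \ref{c2conv}.
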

\begin{proof}
By (H4) we can extract a subsequence with $c_2(y^{(j)}(\cdot))\weakstar c(\cdot)$ in $L^\infty(\om_1,\R^m)$. Let $\varphi\in C_0^\infty(\om_1)$. (Here and below, for $\om\subset\R^n$ open, $C_0^\infty(\om)$ denotes the set of $C^\infty$ functions $\varphi:\om\to\R$ with compact support in $\om$.) Then by \eqref{change}
\begin{align*}
\int_{\om_1}c_2(y^{(j)}(x))\varphi(x)\,dx&=\int_{\om_2}c_2(z)\varphi(\xi^{(j)}(z))\det D\xi^{(j)}(z)\,{\mathrm d}z\\
&=\int_{\om_2}c_2(z)(\varphi(\xi^{(j)}(z))-\varphi(\xi(z)))\det D\xi^{(j)}(z)\,{\mathrm d}z\\
&\hspace{2in}+\int_{\om_2}c_2(z)\varphi(\xi(z))\det D\xi^{(j)}(z)\,{\mathrm d}z.
\end{align*}
The first integral tends to zero since $\varphi(\xi^{(j)}(z))-\varphi(\xi(z))\to 0$ uniformly and $\det D\xi^{(j)}$ is bounded in $L^1(\om_2)$.  The second integral tends to $\displaystyle\int_{\om_2} c_2(z)\varphi(\xi(z))\det D\xi(z)\,dz$ since by \eqref{Hest} $\det D\xi^{(j)}$ is weakly relatively compact in $L^1(\om_2)$ so that by the weak continuity of minors  (see \cite{j8,j17,reshetnyak67a,reshetnyak68}) $\det D\xi^{(j)}\weak \det D\xi$ in $L^1(\om_2)$.
Hence
	\begin{align*}
		\int_{\om_1}c_2(y^{(j)}(x))\varphi(x)\,\dx&\to\int_{\om_2} c_2(z)\varphi(\xi(z))\det D\xi(z)\,{\mathrm d}z=\int_{\om_1}c_2(y(x))\varphi(x)\,\dx.
	\end{align*}
Thus $c_2(y^{(j)})\to c_2(y)$ in the sense of distributions, and so $c_2(y)=c$.
Now apply the same argument to $|c_2(y^{(j)}(x))|^2$, to deduce that $|c_2(y^{(j)}(x))|^2\to |c_2(y)|^2$ in the sense of distributions, and hence in $L^\infty (\om_1)$ weak* (for a further subsequence). Hence $c_2(y^{(j)})\to c_2(y)$ strongly  in $L^2(\om_1,\R^m)$, and thus (for a still further subsequence) a.e. in $\om_1$.
\end{proof}
	\begin{lemma}[\rm see \cite{eisen}]
\label{limsuplem}
Let $E_j\subset \om_1$ be a sequence of measurable sets satisfying\\
$\mL^n( E_j)\geq \delta>0\text{ for all }j.$
Then
$$\mL^n\left(\limsup_{j\to\infty}E_j\right)\geq\delta,$$
where $\displaystyle\limsup_{j\to\infty}E_j:= \bigcap_{k=1}^\infty\bigcup_{j=k}^\infty E_j$.
\end{lemma}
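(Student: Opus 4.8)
The plan is to recognize this as the ``reverse Fatou lemma'' for sets and to prove it by reducing to the continuity of Lebesgue measure along a decreasing sequence of sets of finite measure. The finiteness is where the boundedness of $\om_1$ enters: since $\om_1\subset\R^n$ is bounded we have $\mL^n(\om_1)<\infty$, and this is essential (the statement is false without a finiteness assumption).

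First I would introduce the tail unions $F_k:=\bigcup_{j=k}^\infty E_j$. These form a decreasing sequence of measurable subsets of $\om_1$, and by definition $\limsup_{j\to\infty}E_j=\bigcap_{k=1}^\infty F_k$. Since $E_k\subset F_k$, monotonicity of $\mL^n$ gives $\mL^n(F_k)\geq \mL^n(E_k)\geq\delta$ for every $k$. Moreover $\mL^n(F_1)\leq\mL^n(\om_1)<\infty$, so all the $F_k$ have finite measure.

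Next I would invoke continuity of measure from above: for a decreasing sequence $F_1\supseteq F_2\supseteq\cdots$ of measurable sets with $\mL^n(F_1)<\infty$, one has $\mL^n\!\left(\bigcap_{k=1}^\infty F_k\right)=\lim_{k\to\infty}\mL^n(F_k)$. Since each term of the latter sequence is at least $\delta$, the limit is at least $\delta$, which gives
$$\mL^n\left(\limsup_{j\to\infty}E_j\right)=\mL^n\left(\bigcap_{k=1}^\infty F_k\right)=\lim_{k\to\infty}\mL^n(F_k)\geq\delta,$$
as claimed.

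There is no genuine obstacle here; the only point to take care of is to make explicit the use of $\mL^n(\om_1)<\infty$ so that continuity from above applies, and to note that the $F_k$ are measurable because the $E_j$ are (countable unions and intersections of measurable sets). The lemma is purely measure-theoretic and independent of the homeomorphism structure used elsewhere in the section.
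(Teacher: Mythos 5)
Your proof is correct and follows essentially the same route as the paper: both define the decreasing tail unions $F_k=\bigcup_{j=k}^\infty E_j$, note $\mL^n(F_k)\geq\delta$ and $\mL^n(\om_1)<\infty$, and conclude by continuity of measure from above. Your write-up merely makes the continuity-from-above step and the finiteness hypothesis more explicit than the paper's terse version.
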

\begin{proof}
Let $E^{(k)}=\displaystyle \bigcup_{j=k}^\infty E_j$. Then
$\om_1\supset E^{(1)}\supset E^{(2)}\supset \cdots$
and $\mL^n(E^{(k)})\geq\delta$, $\mL^n(\om_1)<\infty$. Hence
$$\mL^n\left(\limsup_{j\to\infty}E_j\right)=\mL^n\left(\bigcap_{k=1}^\infty E^{(k)}\right)\geq \delta,$$
as required.
\end{proof}
\begin{lemma}
	\label{lemma3}
	Define $h_j(x)=\psi(c_1(x),c_2(y^{(j)}(x)),Dy^{(j)}(x))-\psi(c_1(x),c_2(y(x)),Dy^{(j)}(x))$. Then $h_j\to 0$ in measure in $\om_1$.
\end{lemma}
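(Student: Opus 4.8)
The plan is to fix $\eta>0$ and show that $\mL^n(\{x\in\om_1:|h_j(x)|>\eta\})\to 0$ as $j\to\infty$. One cannot argue directly from the continuity of $\psi$ and the a.e.\ convergence $c_2(y^{(j)})\to c_2(y)$ of Lemma \ref{c2conv}, because $\psi$ is only continuous, not uniformly continuous, on $K\times K\times M^{n\times n}_+$, and the deformation gradients $Dy^{(j)}(x)$ may approach the ``boundary'' of $M^{n\times n}_+$ (determinant tending to $0$, or norm tending to $\infty$) on sets whose measures do not tend to zero. The resolution, and the main point of the proof, is to use the coercivity hypothesis (H2) to confine $Dy^{(j)}$ to a fixed compact subset of $M^{n\times n}_+$ outside a set of small measure, uniformly in $j$.

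First I would introduce, for $R>0$, the set $\mathcal{K}_R:=\{A\in M^{n\times n}:|A|\leq R,\ \det A\geq 1/R\}$, which is a compact subset of $M^{n\times n}_+$ (closedness uses continuity of $\det$, and $A^{-1}$ is automatically bounded on $\mathcal{K}_R$ since $|A^{-1}|\leq C(n)|A|^{n-1}/\det A$). Next I would bound the ``bad set'' $B_{j,R}:=\{x:|Dy^{(j)}(x)|>R\}\cup\{x:\det Dy^{(j)}(x)<1/R\}$ uniformly in $j$: by Chebyshev's inequality and the uniform $W^{1,n}$ bound on the minimizing sequence, $\mL^n(\{|Dy^{(j)}|>R\})\leq R^{-n}\sup_j\|Dy^{(j)}\|_{1,n}^n$; and since $\sup_j\int_{\om_1}h(\det Dy^{(j)}(x))\,dx<\infty$ (already established in the proof), with $h$ continuous on $(0,\infty)$, tending to $+\infty$ at both ends and hence bounded below, a further Chebyshev estimate applied to $h-\inf h\geq 0$ gives $\lim_{R\to\infty}\sup_j\mL^n(\{\det Dy^{(j)}<1/R\})=0$. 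Thus, given $\ep>0$, I can fix $R$ so large that $\mL^n(B_{j,R})<\ep$ for all $j$, and for every $x\in\om_1\setminus B_{j,R}$ we have $Dy^{(j)}(x)\in\mathcal{K}_R$.

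On the compact set $K\times K\times\mathcal{K}_R$ the function $\psi$ is uniformly continuous, so there is $\delta=\delta(\eta,R)>0$ such that $|\psi(c,d,A)-\psi(c,d',A)|\leq\eta$ whenever $c\in K$, $A\in\mathcal{K}_R$, and $d,d'\in K$ with $|d-d'|<\delta$. Hence on $(\om_1\setminus B_{j,R})\cap\{|c_2(y^{(j)}(x))-c_2(y(x))|<\delta\}$ one has $|h_j(x)|\leq\eta$. Since $c_2(y^{(j)})\to c_2(y)$ a.e.\ in $\om_1$ by Lemma \ref{c2conv} and $\mL^n(\om_1)<\infty$, this convergence is also in measure, so $\mL^n(\{|c_2(y^{(j)})-c_2(y)|\geq\delta\})<\ep$ for all large $j$. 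Combining, $\{|h_j|>\eta\}\subset B_{j,R}\cup\{|c_2(y^{(j)})-c_2(y)|\geq\delta\}$ has measure less than $2\ep$ for $j$ large; since $\ep>0$ is arbitrary, $h_j\to 0$ in measure.

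The only delicate step is the uniform-in-$j$ control of $\mL^n(B_{j,R})$, which rests squarely on the coercivity bounds in (H2) (the $|A|^n$ term for the norm, and the $h(\det A)$ term to keep the Jacobian away from zero); the remainder is a routine combination of uniform continuity on compact sets with convergence in measure, and I anticipate no further obstacles.
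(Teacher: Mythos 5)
Your proof is correct, but it runs along a genuinely different track from the paper's. The paper argues by contradiction: if $h_j\not\to0$ in measure, a Chebyshev-type bound on $\psi(c_1,c_2(y^{(j)}),Dy^{(j)})+|Dy^{(j)}|$ produces sets $E_j$ of measure bounded below, Lemma \ref{limsuplem} (Eisen's lemma) gives a point $x\in\limsup_j E_j$ at which $c_2(y^{(j)}(x))\to c_2(y(x))$, and a pointwise subsequence $Dy^{(j_k)}(x)\to A$ with $A\in M^{n\times n}_+$ (forced by (H2), since $\psi$ is bounded there and $h(\delta)\to\infty$ as $\delta\to0+$) yields a contradiction using only plain continuity of $\psi$. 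You instead argue directly and uniformly in $j$: Chebyshev with the $W^{1,n}$ bound and with $\sup_j\int_{\om_1}h(\det Dy^{(j)})\,\dx<\infty$ confines $Dy^{(j)}$ to the compact set $\mathcal K_R\subset M^{n\times n}_+$ off a set of measure $<\ep$, and then uniform continuity of $\psi$ on $K\times K\times\mathcal K_R$ combined with convergence in measure of $c_2(y^{(j)})$ (from Lemma \ref{c2conv}, so, as in the paper, along the subsequence already extracted there) finishes the argument. Your route dispenses with Eisen's lemma and with pointwise subsequence extraction, and is more quantitative; its only extra ingredient is that $h$ be bounded below on $[1/R,\infty)$ (e.g. if $h$ is continuous, as you assume), which is not literally written into (H2) but is implicitly used by the paper anyway when Theorem \ref{W1n} is applied to the minimising sequence. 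The paper's pointwise argument, by contrast, needs only the divergence of $h$ at $0+$ and ordinary continuity of $\psi$. Both proofs share the same underlying mechanism — (H2) keeps the gradients away from the boundary of $M^{n\times n}_+$ except on small sets — so either is a valid proof of the lemma.
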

\begin{proof}
	If not, there exist $\ep>0, \delta>0$ such that (for a further subsequence) the set
	$$S_j:=\{x\in\om_1:|h_j(x)|>\ep\}$$
	is such that $\mL^n(S_j)>\delta$ for all $j$. We also have that 
	$$\int_{\om_1}\psi(c_1(x),c_2(y^{(j)}(x)), Dy^{(j)}(x))\,\dx\leq a<\infty,\;\;\int_{\om_1}|Dy^{(j)}|\,\dx\leq b<\infty$$
for constants $a,b$.
	Hence there exists $M>0$ such that for all $j$
	$$\mL^n(\{x:\psi(c_1(x),c_2(y^{(j)}(x)),Dy^{(j)}(x))+|Dy^{(j)}(x)|>M\})<\frac{\delta}{2}.$$
	Define
		\begin{align*}
			E_j&=\{x\in\om_1:|h_j(x)|>\ep,
			\;\psi(c_1(x),c_2(y^{(j)}(x)),Dy^{(j)}(x))+|Dy^{(j)}(x)|\leq M\}
		\end{align*}
	Then $\mL^n(E_j)\geq \delta-\frac{\delta}{2}=\frac{\delta}{2}>0$. By Lemma \ref{limsuplem}, $\displaystyle\mL^n\left(\limsup_{j\to\infty} E_j\right)\geq\frac{\delta}{2}$. Let $x\in \displaystyle\limsup_{j\to\infty}E_j$ and be such that  (see Lemma \ref{c2conv}) $c_2(y^{(j)}(x))\to c_2(y(x))$. Then there exist a subsequence $j_k\to\infty$ and  $A\in M^{n\times n}$ with
	$$Dy^{(j_k)}(x)\to A, \;\psi(c_1(x), c_2(y^{(j_k)}(x)), Dy^{(j_k)}(x))\to l<\infty$$
	but
		\be
		\label{now}
			|h_{j_k}(x)|=|\psi(c_1(x),c_2(y^{(j_k)}(x)),Dy^{(j_k)}(x))-\psi(c_1(x),c_2(y(x)),Dy^{(j_k)}(x))|\geq\ep.
		\ee
By (H2) $A\in M^{n\times n}_+$ and so 	$l=\psi(c_1(x), c_2(y(x)),A)$. Passing to the limit in \eqref{now} we get a contradiction. 
\end{proof}
The remainder of the proof follows the same pattern as in \cite[Theorem 4.1]{j17}, \cite[Theorem 6.1]{j26}, but we give the details for the convenience of the reader.
We first claim  that $\det(Dy(x)) > 0$ almost everywhere. Indeed, we have $\det Dy^{(j)}(x) \geq 0$ and $\det Dy^{(j)} \rightharpoonup \det Dy\geq 0$ in $L^1(\Omega_1)$. Let 
		\be
			E = \{ x \in \Omega_1 \, : \, \det Dy(x) = 0\}.
		\ee
Then
		\be
			\lim_{j\to\infty}\int_{E}\det Dy^{(j)}(x) \dx =\int_E\det Dy(x)\,\dx=0,
		\ee
	and so for a subsequence (not relabelled) $\det Dy^{(j)}(x) \to 0$ almost everywhere in $E$, and hence by (H2) $\lim_{j\to\infty}\psi(c_1(x),c_2(y^{(j)}(x)), Dy^{(j)}(x))=\infty$. By Fatou's lemma and (H2) we thus have
		\be
			\liminf_{j \to \infty}\int_E \psi(c_1(x),c_2(y^{(j)}(x)), Dy^{(j)}(x))\,\dx &&\\
			&&\hspace{-1in}\geq 
			\int_E \liminf_{j \to \infty}\psi(c_1(x),c_2(y^{(j)}(x)), Dy^{(j)}(x))\,\dx
			.\nonumber
		\ee
	Hence $\mL^n(E)=0$, proving the claim.
	
	To complete the proof, we denote by
		\[
			v^{(j)} = ({\mathbf J}(Dy^{(j)}),\det Dy^{(j)})
		\]
	the vector of minors of $Dy^{(j)}$, which by the weak continuity of minors converges weakly to 
	$v = ({\mathbf J}(Dy),\det Dy)$  in $L^1(\om_1,\R^{\sigma(n)+1})$. By Mazur's theorem 
 (see e.g. \cite[Corollary 3.8]{brezis2011}), there exists a sequence
		\begin{equation}
			w^{(k)} = \sum_{j = k}^{N_k}\lambda_j^{(k)}v^{(j)}
		\end{equation}
	of convex combinations of the $v^{(j)}$ converging strongly to $v$ in $L^1(\Omega_1,\R^{\sigma(n)+1}))$  and hence (extracting a further subsequence) almost everywhere in $\Omega_1$. 
	By the convexity of $g$ and (H3), we have that for a.e. $x\in\om_1$
		\begin{align}\nonumber
			g(c_1(x),c_2(y(x)),w^{(k)}(x)) 
			+ 
			\sum_{j = k}^{N_k}\lambda_j^{(k)}h_j(x)
			&\leq
			\sum_{j = k}^{N_k}\lambda_j^{(k)}\left(h_j(x) 	+g(c_1(x),c_2(y(x)),v^{(j)}(x))\right)
			\\ \label{mazurcombo}
			&=
			\sum_{j = k}^{N_k}\lambda_j^{(k)}\left(h_j(x) 	+\psi(c_1(x),c_2(y(x)),Dy^{(j)}(x))\right).
		\end{align}
By Lemma \ref{lemma3} we may suppose that $h_j(x)\to 0$ a.e. in $\om_1$.	Hence taking the $\liminf_{k\to \infty}$ of both sides in \eqref{mazurcombo} we obtain that for a.e. $x\in\om_1$
		\begin{align}\nonumber
			\psi(c_1(x),c_2(y(x)),Dy(x)) 
			&=
			g(c_1(x),c_2(y(x)),{\mathbf J}(Dy(x)),\det Dy(x)) 
			\\ \label{ineqa}
			&\leq
			\liminf_{k\to\infty} \sum_{j = k}^{N_k} \lambda_j^{(k)} 	\psi(c_1(x),c_2(y^{(j)}(x)),Dy^{(j)}(x)) 
			\\ \nonumber
			&\leq
			\liminf_{j\to\infty} \psi(c_1(x),c_2(y^{(j)}(x)),Dy^{(j)}(x)).
		\end{align}
	Integrating \eqref{ineqa} over $\Omega_1$ and applying Fatou's lemma we obtain  \eqref{lsc} as required.
\end{proof}
\begin{remark}
\label{nonempty}\rm 
A sufficient condition that $\inf_{y\in\mA_p}I_{P_1,P_2}(y)<\infty$ is that $\mA_\infty$ is nonempty. Indeed, if $w\in\mA_\infty$ with inverse $\zeta$ then, since $y$ maps sets of measure zero to sets of measure zero, $\det Dw(x)  \det D\zeta(w(x))=1$ for a.e. $x\in\om_1$. Since $D\zeta$  is bounded in $L^\infty$ we have $\det Dy(x)\geq \mu>0$ a.e.  for some $\mu$, and so $\psi(c_1(x),c_2(w(x)),Dw(x))\leq M<\infty$ for some $M$.
\end{remark}
\subsection{Landmark based registration}\label{landmark}
We now show that for $n\geq 2$ a minimiser exists under the constraint that $y$ maps a finite number $N\geq 1$ of distinct landmark points $p_1,\ldots, p_N$ belonging to $\om_1$ to corresponding distinct landmark points $q_1,\ldots,q_N$ in $\om_2$. We define the corresponding set of admissible maps
$$\bar\mA_n:=\{y\in\mA_n: y(p_i)=q_i \text{ for }i=1,\ldots,N\}.$$
			\begin{theorem}\label{thm:landmark}
Suppose that $n\geq 2$, that {\rm  (H1)-(H4)} hold and that $\mA_\infty$ is nonempty. Then there exists an absolute minimiser $y$ of $I_{P_1,P_2}$ in $\bar\mA_n$, and $I_{P_1,P_2}(y)<\infty$.
			\end{theorem}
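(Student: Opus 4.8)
The plan is to run the proof of Theorem \ref{exthm} essentially verbatim, the only new points being (a) that the constrained class $\bar\mA_n$ is nonempty with $\inf_{\bar\mA_n}I_{P_1,P_2}<\infty$, and (b) that the landmark constraint survives passage to the limit. Step (a) is where the hypothesis $n\geq 2$ enters, and is the main obstacle.

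For (a), set $\bar\mA_\infty:=\{y\in\mA_\infty:y(p_i)=q_i,\ i=1,\dots,N\}$ and pick any $w\in\mA_\infty$ (available by hypothesis), with inverse $\zeta$. The points $w(p_1),\dots,w(p_N)$ are $N$ distinct points of the connected open set $\om_2$, and so are $q_1,\dots,q_N$. Since $n\geq 2$ it is classical that some diffeomorphism $\Phi$ of $\om_2$ onto itself, orientation-preserving, isotopic to the identity and equal to the identity outside a compact subset of $\om_2$, satisfies $\Phi(w(p_i))=q_i$ for all $i$: the configuration space $F_N(\om_2)$ of ordered $N$-tuples of distinct points of $\om_2$ is path connected (the Fadell--Neuwirth fibrations $F_k(\om_2)\to F_{k-1}(\om_2)$ have path-connected fibre $\om_2$ minus $k-1$ points, the latter being connected because $n\geq 2$), and one joins the two configurations by a path in $F_N(\om_2)$ and applies the isotopy extension theorem. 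Extending $\Phi$ by the identity across $\partial\om_2$ (it is already the identity near $\partial\om_2$) gives a bi-Lipschitz homeomorphism of $\bar\om_2$ with $\Phi(\om_2)=\om_2$, so $\Phi\circ w$ is an orientation-preserving homeomorphism $\om_1\to\om_2$ with inverse $\zeta\circ\Phi^{-1}$, both $\Phi\circ w$ and its inverse lie in $W^{1,\infty}$, and $(\Phi\circ w)(p_i)=q_i$. Thus $\Phi\circ w\in\bar\mA_\infty\subset\bar\mA_n$, and arguing as in Remark \ref{nonempty} (with $\Phi\circ w\in\mA_\infty$) gives $I_{P_1,P_2}(\Phi\circ w)<\infty$, hence $\inf_{\bar\mA_n}I_{P_1,P_2}<\infty$. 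For $n=1$ this step genuinely fails, since a homeomorphism of an interval is monotone and cannot reorder the landmarks.

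Now let $y^{(j)}\in\bar\mA_n$ be a minimising sequence, with inverses $\xi^{(j)}$. Exactly as in the proof of Theorem \ref{exthm}, (H2) and the boundedness of $\om_1,\om_2$ bound $y^{(j)}$ in $W^{1,n}(\om_1,\R^n)$ and $\xi^{(j)}$ in $W^{1,n}(\om_2,\R^n)$; the $h$-term of (H2), together with \eqref{inverseform} and the change of variables, supplies the uniform integrability needed to apply Theorem \ref{W1n} to both $y^{(j)}$ and $\xi^{(j)}$. Passing to a subsequence we get $y^{(j)}\weak y$ in $W^{1,n}(\om_1,\R^n)$, $\xi^{(j)}\weak\xi$ in $W^{1,n}(\om_2,\R^n)$, $y^{(j)}\to y$ uniformly on $\bar\om_1$ and $\xi^{(j)}\to\xi$ uniformly on $\bar\om_2$, whence $y\in\mA_n$ with inverse $\xi$. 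Since $y^{(j)}(p_i)=q_i$ for all $j$ and the convergence on $\bar\om_1$ is uniform, $y(p_i)=q_i$, i.e.\ $y\in\bar\mA_n$.

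It remains to establish the lower semicontinuity bound \eqref{lsc}, and this is proved exactly as in Theorem \ref{exthm}, the hypotheses (H1)--(H4) being unchanged: Lemma \ref{c2conv} gives $c_2(y^{(j)})\to c_2(y)$ a.e.\ along a subsequence, Lemmas \ref{limsuplem} and \ref{lemma3} give $h_j\to 0$ in measure, the Jacobian argument gives $\det Dy>0$ a.e.\ in $\om_1$, and Mazur's theorem, the weak continuity of minors, the convexity in (H3) and Fatou's lemma then give $I_{P_1,P_2}(y)\leq\liminf_j I_{P_1,P_2}(y^{(j)})$. Hence $y$ is an absolute minimiser of $I_{P_1,P_2}$ in $\bar\mA_n$ and $I_{P_1,P_2}(y)=\inf_{\bar\mA_n}I_{P_1,P_2}<\infty$.
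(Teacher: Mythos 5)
Your argument is correct and essentially the same as the paper's: both reduce the theorem to the machinery of Theorem \ref{exthm} plus the observation that the uniform convergence supplied by Theorem \ref{W1n} lets the landmark constraint pass to the weak limit, and both establish nonemptiness of the constrained class (and finiteness of the infimum, via Remark \ref{nonempty}) using $N$-transitivity of the compactly supported diffeomorphism group of a connected manifold of dimension $n\geq 2$. The only cosmetic differences are that you compose with a diffeomorphism of $\om_2$ on the target side (the paper composes $w$ with a diffeomorphism $\varphi$ of $\om_1$ sending $p_i$ to $w^{-1}(q_i)$) and that you sketch the transitivity via configuration-space connectivity and the isotopy extension theorem, where the paper simply cites it.
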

			\begin{proof}
By assumption there exists some $w\in\mA_\infty$. Let $\tilde q_i=w^{-1}(q_i), i-1,\ldots,N$. Since $n\geq 2$, by   \cite[Lemma 2.1.10]{banyaga}, \cite{michor94},\cite[p472]{KrieglMichor} the group of smooth diffeomorphisms of $\om_1$ which are equal to the identity outside some compact set is $N$-transitive. Thus there exists a smooth diffeomorphism $\varphi:\om_1\to\om_1$ which is equal to the identity outside some compact subset of $\om_1$ and such that $\varphi(p_i)=\tilde q_i$ for each $i$. In particular $D\varphi$ and $D(\varphi^{-1})$ are uniformly bounded in $\om_1$. Define $\tilde y=w^{-1}\circ \varphi$. Then $\tilde y\in W^{1,\infty}(\om_1,\R^n), \tilde y^{-1}\in W^{1,\infty}(\om_2,\R^n)$ and $\tilde y(p_i)=q_i$ for each $i$. Hence $\bar\mA_n$ is nonempty and $\inf_{\bar\mA_n}I_{P_1,P_2}<\infty$. The result now follows because for a minimising sequence $y^{(j)}$ of $I_{P_1,P_2}$ in $\bar\mA_n$ as in Theorem \ref{exthm} with $y^{(j)}\weak y$ in $W^{1,n}(\om_1,\R^n)$ we have $y^{(j)}(p_i)=q_i\to y(p_i)$ for each $i$, and hence $y\in\bar\mA_n$.		\end{proof}
\begin{remark}\rm  If we assume only that $p_i\in\bar\om_1$,  $q_i\in\bar\om_2$ then $\tilde\mA_n$ may be empty. Suppose that $p_i\in\partial\om_1$ for $1\leq i\leq k$, $p_i\in\om_1$ otherwise. Then  by Theorem \ref{Apequiv} a necessary condition for $\tilde\mA_n$ to be nonempty (so that a minimiser exists) is that 
\be
\label{nec}
q_i\in\partial\om_2\text{ for }1\leq i\leq k,\; q_i\in\om_2\text{ otherwise}.
\ee

 Consider the case when $\om_1=B_n:=\{x\in\R^n:|x|<1\}$ (equivalently for any image $\om_1$ of $B_n$ under an orientation-preserving bi-Lipschitz map, such as an $n-$cube). Then if $n=2$ the necessary condition \eqref{nec} is not sufficient. For example let $\om_1=\om_2=B_2$ and $N=k=4$ with $p_1=q_1=(0,1)$, $p_2=q_3=(0,1)$, $p_3=q_2=(-1,0)$, $p_4=q_4=(0,-1)$. Then any  $y\in \tilde\mA_2$  restricted to $S^1=\partial B_2$ would be a homeomorphism of $S^1$ with $y(p_i)=q_i, 1\leq i\leq 4$, which is impossible since then the connected image under $y$ of the arc $C:=\{(\cos\theta,\sin\theta):0\leq\theta\leq \frac{\pi}{2}\}$  would contain either $q_3$ or $q_4$, so that either $p_3\in C$ or $p_4\in C$, which is not the case.

However if $n\geq 3$  then \eqref{nec} {\it is} sufficient. Following the proof of Theorem \ref{thm:landmark} it is enough to prove this in the case $\om_1=\om_2=B_n$. Applying the result on transitivity cited in the proof to the manifold $S^{n-1}=\partial B_n$, whose dimension is $\geq 2$, we obtain a smooth diffeomorphism $f$ of $S^{n-1}$ with $f(p_i)=q_i$ for $1\leq i\leq k$. The radial extension 
$$\Phi(x)=|x|f\left(\frac{x}{|x|}\right)$$
is a bi-Lipschitz self-map of the closed ball $\bar B_n$. If $k=N$ then $\Phi\in\tilde\mA_n$. If $k<N$ then, applying $(N-k)-$transitivity for diffeomorphisms of $B_n$, there exists  a diffeomorphism $\tilde \varphi$ of $B_n$ equal to the identity near $S^{n-1}$ such that $\tilde\varphi(\Phi(p_i))=q_i$ for $i=k+1,\ldots,N$, so that $\tilde\varphi\circ\Phi \in \tilde\mA_n$.  (We remark that  $f$ cannot in general be extended to a diffeomorphism of $\bar B_n$, though, for example, a theorem of Smale \cite{smale59} (see Thurston \cite[Theorem 3.10.11]{thurston}) states that this is possible for $n=3$.)
\end{remark}
\subsection{Comparing one image with part of another}
\label{part}
	In this subsection we regard $P_1 = (\Omega_1,c_1)$ as a template image to be detected as a homeomorphic part of the image $P_2 = (\Omega_2,c_2)$, allowing for changes of scale, perspective, or other affine transformations. (Because in this situation $P_1$ and $P_2$ haves different roles, integrands $\psi$ that do not satisfy (C3) may be appropriate.)
	
	Let $S$ be a relatively closed subset of $M^{n \times n}_+$. We consider the minimisation of $I_{P_1,P_2}(y)$ subject to the constraint that $y(\Omega_1) =a+ A\Omega_1  \subset \Omega_2$, where $a \in \R^n$ and $A \in S$, and establish existence of minimisers. 
	
	Interesting examples with relevance to particular object detection and recognition problems include
		\begin{enumerate}[label={(\roman*)}]\label{S:examples}
			\item {$S = \{\mu \1 : 0 < \lambda \leq \mu \leq \Lambda < \infty\}$ for fixed $\lambda,\Lambda$, detecting an undistorted template image within a target image at a range of scalings (or a fixed scaling if $\lambda= \Lambda$);}
			\item{ $S = \{\mu R : R \in SO(n), 0 < \lambda \leq \mu \leq \Lambda < \infty\}$ for fixed $\lambda,\Lambda$, allowing for rotation, scaling and magnification/reduction within a certain closed range.}
			\item{$S = M^{n\times n}_+$, allowing for any affine transformation of $\om_1$.}
		\end{enumerate}	
We define the set of admissible maps by
\begin{align*}
			\mathcal{A}_{n,S} \eqdef 
			\{
			y \in W^{1,n}(\Omega_1,\R^n): y:\om_1\to a+A\om_1\subset\om_2  \text{ a homeomorphism}&\\ &\hspace{-2.3in}\text{for some }a\in\R^n, A\in S \text{ with }y^{-1}\in W^{1,n}(a+A\om_1,\R^n)\}.
			\end{align*}
		\begin{theorem}\label{thm:closedexistence}
			Suppose  {\rm (H1)-(H4)} hold and that $\mathcal{A}_{n,S}$ is nonempty. Then there exists an absolute minimiser of $I_{P_1,P_2}$ in  $\mathcal{A}_{n,S}$. 
		\end{theorem}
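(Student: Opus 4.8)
The plan is to reduce the problem to one on the \emph{fixed} domain $\om_1$, where the proof of Theorem~\ref{exthm} can be reused, by factoring out the affine part of the target. Writing $y(x)=a+A\tilde y(x)$ sets up a correspondence between $\mA_{n,S}$ and the set of triples $(\tilde y,a,A)$ with $\tilde y\in\mA_n(\om_1,\om_1)$, $a\in\R^n$, $A\in S$ and $a+A\om_1\subset\om_2$; under it $Dy(x)=A\,D\tilde y(x)$ and
\[
I_{P_1,P_2}(y)=\int_{\om_1}\psi\big(c_1(x),c_2(a+A\tilde y(x)),A\,D\tilde y(x)\big)\,\dx=:J(\tilde y,a,A),
\]
so the two infima coincide and a minimiser of $J$ yields one of $I_{P_1,P_2}$. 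First I would take a minimising sequence $(\tilde y^{(j)},a^{(j)},A^{(j)})$, set $y^{(j)}=a^{(j)}+A^{(j)}\tilde y^{(j)}$ and $\xi^{(j)}=(y^{(j)})^{-1}$, and assume the infimum is finite (otherwise every element of $\mA_{n,S}$ is a minimiser).

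The second step is to establish compactness of the affine data together with the crucial nondegeneracy $\det A>0$. Since $y^{(j)}(\om_1)=a^{(j)}+A^{(j)}\om_1\subset\om_2$ is bounded and $\om_1$ contains a ball, the bound $\mathrm{diam}(A^{(j)}\om_1)\leq\mathrm{diam}(\om_2)$ forces $|A^{(j)}|$, and hence $|a^{(j)}|$, to be bounded, so along a subsequence $a^{(j)}\to a$ and $A^{(j)}\to A$ with $\det A\geq0$. By (H2), $y^{(j)}$ is bounded in $W^{1,n}(\om_1,\R^n)$ and the Jacobians $\det Dy^{(j)}=\det A^{(j)}\cdot\det D\tilde y^{(j)}$ are equi-integrable (via the term $h(\det Dy^{(j)})$ and the de la Vall\'ee Poussin theorem), so $\det Dy^{(j)}\weak\det Dy$ in $L^1(\om_1)$ (by weak continuity of minors when $n\geq2$, directly when $n=1$) and $\int_{\om_1}\det Dy=|\om_1|\lim_j\det A^{(j)}=|\om_1|\det A$. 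If $\det A=0$, then $\det Dy=0$ a.e., so along a further subsequence $\det Dy^{(j)}\to0$ a.e., whence by (H2) the integrand of $I_{P_1,P_2}(y^{(j)})$ tends to $+\infty$ a.e. and Fatou's lemma contradicts minimality; hence $\det A>0$. Since $S$ is relatively closed in $M^{n\times n}_+$, this gives $A\in S$; and since $a+A\om_1$ is open, is contained in $\overline{\om_2}$, and $\om_2$ is a Lipschitz domain (so that $\mathrm{int}\,\overline{\om_2}=\om_2$), we obtain $a+A\om_1\subset\om_2$.

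With $\det A>0$ available, the inverse matrices $(A^{(j)})^{-1}$ are eventually bounded, so $D\tilde y^{(j)}=(A^{(j)})^{-1}Dy^{(j)}$, and likewise $\tilde\xi^{(j)}:=(\tilde y^{(j)})^{-1}$, are bounded in $W^{1,n}(\om_1,\R^n)$; extract $\tilde y^{(j)}\weak\tilde y$ and $\tilde\xi^{(j)}\weak\tilde\xi$. Hypothesis (ii) of Theorem~\ref{W1n} holds for both $\tilde y^{(j)}$ and $\tilde\xi^{(j)}$: the relevant integrals of $\det D\tilde y^{(j)}$ and $\det D\tilde\xi^{(j)}$ are controlled by the (H2)-bounds on $\det Dy^{(j)}$ and, via \eqref{inverseform}, on $\det D\xi^{(j)}$, using that $\det A^{(j)}$ lies in a compact subinterval of $(0,\infty)$ and that equi-integrability is preserved under multiplication by bounded factors and affine changes of variables (one more application of de la Vall\'ee Poussin). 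Then Theorem~\ref{W1n} and Corollary~\ref{Apequiv}, applied to the homeomorphisms $\om_1\to\om_1$, give uniform convergence $\tilde y^{(j)}\to\tilde y$ and $\tilde\xi^{(j)}\to\tilde\xi$ on $\overline{\om_1}$ with $\tilde y,\tilde\xi$ mutually inverse homeomorphisms of $\overline{\om_1}$, so $\tilde y\in\mA_n(\om_1,\om_1)$; consequently $y^{(j)}=a^{(j)}+A^{(j)}\tilde y^{(j)}\to y:=a+A\tilde y$ uniformly, $y$ coincides with the weak $W^{1,n}$ limit of $y^{(j)}$, and $y\in\mA_{n,S}$ with affine data $(a,A)$.

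It then remains to prove the lower semicontinuity $I_{P_1,P_2}(y)\leq\liminf_j I_{P_1,P_2}(y^{(j)})$, and I would obtain this exactly as in the proof of Theorem~\ref{exthm}. The analogue of Lemma~\ref{c2conv}, namely $c_2(y^{(j)}(\cdot))=c_2(a^{(j)}+A^{(j)}\tilde y^{(j)}(\cdot))\to c_2(y(\cdot))$ a.e. in $\om_1$, follows from the same distributional testing and change-of-variables argument, now performed on the fixed domain $\om_1$ with $\tilde\xi^{(j)}$ in place of $\xi^{(j)}$, with the extra input that $c_2(a^{(j)}+A^{(j)}\cdot)\to c_2(a+A\cdot)$ strongly in $L^2(\om_1,\R^m)$ (continuity of the affine group action on $L^p$, $p<\infty$, since $\det A^{(j)}\to\det A>0$). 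Given this, Lemma~\ref{lemma3} (suitably restated), the fact that $\det Dy>0$ a.e. proved above, Mazur's theorem, the convexity in (H3), the weak $L^1(\om_1)$ convergence of the minors of $Dy^{(j)}=A^{(j)}D\tilde y^{(j)}$ (Cauchy--Binet together with $A^{(j)}\to A$), and Fatou's lemma close the argument just as before. I expect the principal obstacle to be precisely the non-fixed target $a+A\om_1$: ensuring that the limiting affine factor $A$ stays in $M^{n\times n}_+$ (hence in $S$), so that the renormalisation onto $\om_1$ is nondegenerate, and handling $c_2$ composed with the varying affine maps; once these points are dealt with the problem genuinely reduces to that of Theorem~\ref{exthm}.
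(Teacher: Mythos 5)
Your proposal is correct and follows essentially the same strategy as the paper's proof: bound $(a^{(j)},A^{(j)})$, rule out $\det A=0$ via $\int_{\om_1}\det Dy^{(j)}\,\dx=\det A^{(j)}\,\mL^n(\om_1)$ together with Fatou's lemma and the blow-up of $h$ in (H2), invoke the relative closedness of $S$, renormalise to a fixed-target problem so that Theorem \ref{W1n} applies, and rerun the machinery of Theorem \ref{exthm} with a modified Lemma \ref{c2conv} handling $c_2$ composed with the varying affine maps. The only difference is cosmetic: you pull back to homeomorphisms of $\om_1$ onto itself via $(A^{(j)})^{-1}(y^{(j)}-a^{(j)})$ (and, usefully, make explicit the inclusion $a+A\om_1\subset\om_2$ via regularity of the Lipschitz domain), whereas the paper uses $B^{(j)}=A^{(j)}A^{-1}$ to map onto the fixed limit target $a+A\om_1$; both reductions work in the same way.
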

		\begin{proof}
			As in Theorem \ref{exthm} we consider a minimising sequence $y^{(j)}$ for which 
			\begin{equation}
				\lim_{j\to\infty}I_{P_1,P_2}(y^{(j)})=  \inf_{y \in \mathcal{A}_{n,S} } I_{P_1,P_2}(y):=l<\infty.
			\end{equation}
	(If desired we can take $y^{(j)}$ to be for each $j$ a minimiser of $I_{P_1,P_2}(y)$ in $\mA(\om_1, a^{(j)}+A^{(j)}\om_1)$, whose existence is guaranteed by Theorem \ref{exthm}, but this does not simplify the proof.) We have that $y^{(j)}(\Omega_1) =  a^{(j)}+A^{(j)}\Omega_1\subset \om_2$ for some $a^{(j)}\in\R^n, A^{(j)}\in M^{n\times n}_+$. We first claim that $a^{(j)}$ and $A^{(j)}$ are bounded. Let $B(x_0,\ep)\subset\om_1$ for some $x_0$ and $\ep>0$. Since $\om_2$ is bounded we have that $|a^{(j)}+A^{(j)}(x_0+\ep z)|\leq M<\infty$ whenever $|z|<1$. Hence $|A^{(j)}\ep z|\leq|a^{(j)}+A^{(j)}(x_0+\ep z)-(a^{(j)}+A^{(j)}x_0)|\leq 2M$, from which it follows that $|A^{(j)}|\leq 2M\ep^{-1}$, so that  the  $A^{(j)}$, and hence also the $a^{(j)}$, are bounded. Therefore we may assume that $A^{(j)}\to A$ and $a^{(j)}\to a$ as $j\to \infty$. 

Since
			\begin{equation}
				\int_{\Omega_1} \det Dy^{(j)} \,\dx =\mL^n(a^{(j)}+A^{(j)}\om_1) =(\det A^{(j)})\mL^n(\Omega_1),
			\end{equation}
			 if $\det{A^{(j)}} \to 0$, then $\det Dy^{(j)} \to 0$ in $L^1(\Omega_1)$ and so for a subsequence, not relabelled, $\det Dy^{(j)} \to 0$ a.e. in $\Omega_1$. Hence, by Fatou's Lemma,
			$$\infty=\int_{\Omega_1} \liminf_{j \to \infty} \,h(\det Dy^{(j)})\,dx 	\leq 	
				\liminf_{j \to \infty}I_{P_1,P_2}(y^{(j)})<\infty.$$ 
 This contradiction proves that $\det A>0$, so that $A\in S$.		

Now define $B^{(j)}:=A^{(j)}A^{-1}$ and $\tilde y^{(j)}:\om_1\to\R^n$	by
\be
\tilde y^{(j)}(x):=a+(B^{(j)})^{-1}(y^{(j)}(x)-a^{(j)}),
\ee
so that $Dy^{(j)}(x)=B^{(j)}D\tilde y^{(j)}(x)$ and  $\tilde y^{(j)}(\om_1)=a+A\om_1\subset\om_2$. It is easily checked that $\tilde y^{(j)}:\om_1\to a+A\om_1$ is a homeomorphism for each $j$. Note that $\lim_{j\to\infty}B^{(j)}=\1$, where $\1$ denotes the identity matrix in $M^{n\times n}$. Then
\be
I_{P_1,P_2}(y^{(j)})=\int_{\om_1}\psi(c_1(x),c_2(a^{(j)}+B^{(j)}(\tilde y^{(j)}(x)-a)), B^{(j)}D\tilde y^{(j)}(x))\,\dx.
\ee

As in Theorem \ref{exthm} we may suppose that 
$$\tilde y^{(j)}\weak  y \text { in } W^{1,n}(\om_1,\R^n),\;
\tilde \xi^{(j)}\weak \xi\text { in } W^{1,n}(a+A\om_1,\R^n),$$
where $\tilde \xi^{(j)}:=(\tilde y^{(j)})^{-1}$,
and it follows as before that $ y:\om_1\to a+A\om_1$ is a homeomorphism with inverse $\xi$. A  modification of Lemma \ref{c2conv} shows that for a further subsequence
$c_2(a^{(j)}+B^{(j)}(\tilde y^{(j)}(x)-a))\to c_2(y(x))$ a.e. in $\om_1$. The key point is to show that for $\varphi\in C_0^\infty(\om_1)$
\be\nonumber
\lim_{j\to\infty}\int_{a+A\om_1}c_2(a^{(j)}+B^{(j)}(z-a))\varphi(\xi(z))\det D\tilde\xi^{(j)}(z)\,{\mathrm d}z=\int_{a+A\om_1}c_2(z)\varphi(\xi(z))\det D\xi(z)\,{\mathrm d}z.
\ee
But this follows since $\det D\tilde\xi^{(j)}\weak \det D\xi$ in $L^1(\a+A\om_1)$ and (extracting a further subsequence) 
$c_2(a^{(j)}+B^{(j)}(\cdot-a))\to c_2$ boundedly a.e. in $\a+A\om_1$.
	The remainder of the proof of Theorem \ref{exthm} is then easily adapted to show that $y$ is a minimiser.	\end{proof}	

\subsection{Image morphing and metrics}
\label{metricsection}
Motivated by the work of Berkels, Effland and Rumpf \cite{BerkelsEfflandRumpf2015} we indicate in this section some connections of our theory to the morphing of images and corresponding metrics between images.

We assume that $\om=\om_1=\om_2$ is fixed, so that  images are described by  maps $c \in L^\infty(\om,K)$. Suppose that $\psi$ satisfies (H1)-(H3) together with 
$$\psi(c_1,c_2,A)=\psi(c_2,c_1,A^{-1})\det A,$$
and $\psi(c_1,c_2,A)=0$ iff $c_1=c_2$ and $A\in SO(n)$. 
 Let $P:[0,\infty)\to [0,\infty)$ be a strictly increasing continuous function with $P(0)=0$, and define
$$F(c,d):=P\left(\min_{y\in\mA_n}\int_\om\psi(c(x),d(y(x)),Dy(x))\,\dx\right).$$
Then $F(c,d)$ is finite (since the identity map belongs to $\mA_n$), and satisfies $0\leq F(c,d)=F(d,c)$. Furthermore $F(c,d)=0$ iff for some $y\in \mA_n$,  $Dy(x)\in SO(n)$   and $c(x)=d(y(x))$ for a.e. $x\in\om$. But (see    \cite{ballhorner}) this implies by \cite{reshetnyak67a} that $Dy(x)=R$ a.e. for some (constant) $R\in SO(n)$, so that $y(x)=R x+a$ for some $a\in\R^n$, and in particular $\om=R\om+a$. Suppose first that $\om$ has no nontrivial Euclidean symmetry, so that $\om=R\om+a$ implies $R=\1$ and 
hence $a=0$. Thus $F(c,d)=0$ iff $c=d$. 

Given an integer $N\geq 1$ and $c,d\in L^\infty(\om,K)$ let
$${\mathcal C}_N(c,d):=\{\cv=(c_1,c_2,\ldots,c_{N+1}): \text{each }c_i\in L^\infty(\om,K), c_1=c, c_{N+1}=d\},$$
and define
$$F_N(c,d)=\inf_{{\cv\in\mathcal C}_N(c,d)}\sum_{i=1}^{N}F(c_{i},c_{i+1}).$$
Then $F_N(c,d)=F_N(d,c)\geq 0$ and $F_N(c,c)=0$. Also, since we can take $c_{N+2}=c_{N+1}=d$ we have that 
$$F_{N+1}(c,d)\leq F_N(c,d)\leq F_1(c,d)=F(c,d).$$
\begin{lemma}
\label{pseudometric}
For $N\geq 1, M\geq 1$ and $c,d,e\in L^\infty(\om,K)$ we have that 
\be
\label{psm}F_{N+M}(c,e)\leq F_N(c,d)+F_M(d,e).
\ee
\end{lemma}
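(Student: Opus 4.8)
The plan is to derive \eqref{psm} directly from the definition of the $F_N$ by concatenating near-optimal chains, using nothing about $\psi$ or $F$ beyond the finiteness already established. Fix $\ep>0$. By definition of $F_N(c,d)$ there is a chain $\cv=(c_1,\ldots,c_{N+1})\in\mathcal C_N(c,d)$ with $\sum_{i=1}^{N}F(c_i,c_{i+1})\leq F_N(c,d)+\ep$, and by definition of $F_M(d,e)$ there is a chain $\dv=(d_1,\ldots,d_{M+1})\in\mathcal C_M(d,e)$ with $\sum_{i=1}^{M}F(d_i,d_{i+1})\leq F_M(d,e)+\ep$. First I would observe that, since $c_{N+1}=d=d_1$, the list obtained by gluing the two chains at $d$, namely $(c_1,\ldots,c_{N},d,d_2,\ldots,d_{M+1})$, has exactly $N+M+1$ entries, all in $L^\infty(\om,K)$, with first entry $c$ and last entry $e$; hence it belongs to $\mathcal C_{N+M}(c,e)$.

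Next I would evaluate the sum appearing in the definition of $F_{N+M}$ along this glued chain. Its consecutive pairs are precisely the $N$ pairs $(c_i,c_{i+1})$ coming from $\cv$ together with the $M$ pairs $(d_i,d_{i+1})$ coming from $\dv$: the pair $(c_N,d)=(c_N,c_{N+1})$ is the last one from $\cv$, and $(d,d_2)=(d_1,d_2)$ is the first one from $\dv$, so nothing is double counted or omitted. Therefore
$$F_{N+M}(c,e)\leq \sum_{i=1}^{N}F(c_i,c_{i+1})+\sum_{i=1}^{M}F(d_i,d_{i+1})\leq F_N(c,d)+F_M(d,e)+2\ep.$$
Letting $\ep\to 0$ then yields \eqref{psm}.

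Since the only ingredients are the infimum structure of the $F_N$ and the fact, already noted, that each $F(c_i,c_{i+1})$ is finite, there is no genuine obstacle; the single point requiring care is the index bookkeeping, confirming that the glued chain has exactly $N+M$ steps and that the sum of its consecutive-pair terms splits cleanly into the two original sums. In particular, neither the symmetry $F_N(c,d)=F_N(d,c)$ nor any property of $\psi$ is used in proving \eqref{psm}.
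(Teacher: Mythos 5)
Your argument is correct and is essentially the paper's own proof: both pick $\ep$-near-optimal chains for $F_N(c,d)$ and $F_M(d,e)$, glue them at the common image $d$ to obtain an admissible chain in $\mathcal C_{N+M}(c,e)$, bound $F_{N+M}(c,e)$ by the concatenated sum, and let $\ep\to 0+$. The only difference is that you spell out the index bookkeeping explicitly, which the paper leaves implicit.
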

\begin{proof}
Given $\ep>0$  there exist $c_i\in L^\infty(\om,K)$  with $c_1=c, c_{N+1}=d, c_{N+M+1}=e$ and
$$\sum_{i=1}^{N}F(c_i,c_{i+1})<F_N(c,d)+\ep,\;\sum_{i=N+1}^{M+N}F(c_i,c_{i+1})<F_M(d,e)+\ep.$$
Hence $F_{N+M}(c,e)\leq  F_N(c,d)+F_M(d,e)+2\ep$ and the result follows by  letting $\ep\to 0+$.
\end{proof}
Now define for $c,d\in L^\infty(\om,K)$
\be
\label{metric}
\rho(c,d):=\inf_{N\geq 1}F_N(c,d)=\lim_{N\to\infty}F_N(c,d).
\ee
Letting $N,M\to\infty$ in \eqref{psm} we deduce from Lemma \ref{pseudometric} that $\rho$ is a pseudometric on $L^\infty(\om,K)$, that is $\rho(c,d)=\rho(d,c)\geq 0$, $\rho(c,c)=0$ and $\rho$ satisfies the triangle inequality
$$\rho(c,e)\leq \rho(c,d)+\rho(d,e)\text{ for all }c,d,e\in L^\infty(\om,K).$$
However (as pointed out by a referee) $\rho$ is not necessarily a metric, for which we need also that $\rho(c,d)=0$ implies $c=d$. For example, in the case when $\psi$ is given by \eqref{8aa}, \eqref{8a} with $g(c_1,c_2)=|c_1-c_2|^2$ we have that $\rho(c,d)=0$ for {\it any} $c,d$. Indeed we can choose 
$$c_i=\left(1-\frac{i-1}{N}\right)c+\frac{i-1}{N}d, \;1\leq i\leq N+1,$$
when, for $y(x)=x$ we have that
$$\int_\om(\Psi(Dy(x))+(1+\det Dy(x))|c_i(x)-c_{i+1}(y(x))|^2)\,dx=\frac{2}{N^2}\int_\om|c-d|^2\,dx$$
and hence, taking $P(t)=t$,
$$F_N(c,d)\leq \frac{2}{N}\int_\om|c-d|^2dx\to 0 \text{ as }N\to\infty.$$ In this case, or when $\psi$ is given by \eqref{8aa}, \eqref{8b}, it seems plausible that, under suitable hypotheses on $\Psi$,  $\rho$ is a metric if we take $P(t)=t^\half$, but this seems to be a delicate question involving compositions of Sobolev homeomorphisms.

A related issue is whether the infimum in the definition of $F_N(c,d)$ is attained. To this end we impose a strengthened polyconvexity condition\vspace{.05in}\\
 (H3)$'$ 
 there is a convex function $g:\R^m\times \R^m\times \R^{\sigma(n)}\times (0,\infty)\to \R$  such that
$$\psi(c,d,A)=g(c,d\det A,{\mathbf J}_{n-1}(A), \det A)\text{ for all }c,d\in K, A\in M^{n\times n}_+.$$
 Note that by Remark \ref{convexity}  (H3)$'$ is satisfied in the mass-preserving case \eqref{8aa}, \eqref{8b} provided $\Psi$ is polyconvex. However it is not hard to check that (H3)$'$  is not satisfied in the case \eqref{8aa}, \eqref{8a}.

\begin{lemma}
\label{lsc1}
Suppose  {\rm (H1),(H2),}{\rm (H3)}$'$ hold and that $c^{(j)}\weakstar c,\,d^{(j)}\weakstar d$ in $L^\infty(\om,\R^m)$. Then
\be
\label{lsc2}
F(c,d)\leq\liminf_{j\to\infty}F(c^{(j)},d^{(j)})
\ee
\end{lemma}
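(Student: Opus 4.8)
The plan is to reduce \eqref{lsc2} to a lower semicontinuity statement for
$$G(c,d):=\min_{y\in\mA_n}\int_\om\psi(c(x),d(y(x)),Dy(x))\,\dx,$$
the minimum being attained by Theorem \ref{exthm} (note that (H3)$'$ implies (H3), by composing $g$ with the affine map $A\mapsto(c,d\det A,{\mathbf J}_{n-1}(A),\det A)$, and (H4) holds since the intensities lie in $L^\infty(\om,K)$). Since $P$ is continuous and strictly increasing, $F=P\circ G$ and it suffices to prove $G(c,d)\le\liminf_{j\to\infty}G(c^{(j)},d^{(j)})$. First I would pass to a subsequence along which $G(c^{(j)},d^{(j)})$ converges to this $\liminf$, which may be assumed finite, and choose for each $j$ a minimiser $y^{(j)}\in\mA_n$ for the pair $(c^{(j)},d^{(j)})$. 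Exactly as in the proof of Theorem \ref{exthm}, (H2) bounds $y^{(j)}$ and $\xi^{(j)}:=(y^{(j)})^{-1}$ in $W^{1,n}$ and makes $\det Dy^{(j)}$ and $\det D\xi^{(j)}$ equi-integrable, so along a further subsequence $y^{(j)}\weak y$ in $W^{1,n}(\om,\R^n)$ and $\xi^{(j)}\weak\xi$ in $W^{1,n}(\om,\R^n)$; by Theorem \ref{W1n} and Corollary \ref{Apequiv} both convergences are uniform on $\bar\om$, $y\in\mA_n$ with inverse $\xi$, and $\det Dy>0$ a.e.\ (the Fatou argument of Theorem \ref{exthm}, using $\lim_{\delta\to0+}h(\delta)=\infty$).

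The genuinely new ingredient is the identification of the weak limits of the arguments of the integrand through the representation (H3)$'$, namely
$$\psi\bigl(c^{(j)}(x),d^{(j)}(y^{(j)}(x)),Dy^{(j)}(x)\bigr)=g\bigl(c^{(j)}(x),\,e^{(j)}(x),\,{\mathbf J}_{n-1}(Dy^{(j)}(x)),\,\det Dy^{(j)}(x)\bigr),\quad e^{(j)}:=d^{(j)}(y^{(j)}(\cdot))\det Dy^{(j)}(\cdot).$$
By the weak continuity of minors together with the equi-integrability of $\det Dy^{(j)}$, one has ${\mathbf J}_{n-1}(Dy^{(j)})\weak{\mathbf J}_{n-1}(Dy)$ and $\det Dy^{(j)}\weak\det Dy$ in $L^1(\om)$, while $c^{(j)}\weak c$ in $L^1(\om)$. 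The key point — and the hard part — is to show $e^{(j)}\weak e:=d(y(\cdot))\det Dy(\cdot)$ in $L^1(\om)$. For this I would argue as in Lemma \ref{c2conv} via the change of variables \eqref{change}, the decisive difference being that here the Jacobian factor built into $e^{(j)}$ cancels the one produced by the substitution, so that for $\varphi\in C_0^\infty(\om)$
$$\int_\om e^{(j)}(x)\varphi(x)\,\dx=\int_\om d^{(j)}(z)\varphi(\xi^{(j)}(z))\,{\mathrm d}z\longrightarrow\int_\om d(z)\varphi(\xi(z))\,{\mathrm d}z=\int_\om e(x)\varphi(x)\,\dx,$$
using $d^{(j)}\weakstar d$ in $L^\infty(\om,\R^m)$ and $\xi^{(j)}\to\xi$ uniformly on $\bar\om$. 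Since $|e^{(j)}|\le C\det Dy^{(j)}$ with $\det Dy^{(j)}$ equi-integrable, $(e^{(j)})$ is weakly relatively compact in $L^1(\om)$ by the Dunford--Pettis theorem, so the distributional convergence above upgrades to weak $L^1$ convergence. This is precisely where (H3)$'$, rather than merely (H3), is required: in Lemma \ref{c2conv} the intensity $c_2$ was fixed, whereas here $d^{(j)}$ varies with $j$ and $d^{(j)}(y^{(j)})$ on its own carries no compactness, only the mass-weighted quantity $d^{(j)}(y^{(j)})\det Dy^{(j)}$ does.

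To finish I would run the Mazur-type argument of Theorem \ref{exthm}. Setting $V^{(j)}:=\bigl(c^{(j)},e^{(j)},{\mathbf J}_{n-1}(Dy^{(j)}),\det Dy^{(j)}\bigr)\weak V:=\bigl(c,e,{\mathbf J}_{n-1}(Dy),\det Dy\bigr)$ in $L^1$, Mazur's theorem produces convex combinations $W^{(k)}=\sum_{j=k}^{N_k}\lambda_j^{(k)}V^{(j)}\to V$ strongly in $L^1$, hence a.e.\ along a subsequence; using the convexity and lower semicontinuity of $g$ (continuity holding at $V(x)$ since $\det Dy(x)>0$ a.e.) one gets, for a.e.\ $x$,
$$g(V(x))\le\liminf_{k\to\infty}\sum_{j=k}^{N_k}\lambda_j^{(k)}\,\psi\bigl(c^{(j)}(x),d^{(j)}(y^{(j)}(x)),Dy^{(j)}(x)\bigr),$$
and then, since $\psi\ge0$, Fatou's lemma together with the convergence of the weighted tail averages $\sum_{j=k}^{N_k}\lambda_j^{(k)}G(c^{(j)},d^{(j)})$ gives
$$\int_\om\psi(c(x),d(y(x)),Dy(x))\,\dx=\int_\om g(V(x))\,\dx\le\liminf_{j\to\infty}G(c^{(j)},d^{(j)}).$$
Since $y\in\mA_n$, the left-hand side is $\ge G(c,d)$, which proves the desired inequality and hence \eqref{lsc2}. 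In summary, the only step that genuinely departs from the proof of Theorem \ref{exthm} is establishing $e^{(j)}\weak e$, and that is where I expect the main obstacle to lie.
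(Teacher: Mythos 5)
Your proposal is correct and follows essentially the same route as the paper: pick minimisers $y^{(j)}$ for each pair (existence via Theorem \ref{exthm}, since (H3)$'$ implies (H3)), extract weak $W^{1,n}$ and uniform limits as in that theorem, prove $d^{(j)}(y^{(j)})\det Dy^{(j)}\weak d(y)\det Dy$ in $L^1$ by the change-of-variables argument of Lemma \ref{c2conv} (exactly the paper's key step), and conclude by the Mazur/Fatou lower semicontinuity argument using the joint convexity in (H3)$'$. The only difference is that you spell out the final Mazur step and the Dunford--Pettis upgrade explicitly, which the paper leaves as a reference to the proof of Theorem \ref{exthm}.
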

\begin{proof}
By Theorem \ref{exthm}, for each $j$ there exists a minimiser $y^{(j)}\in \mA_n$ of
$$I^{(j)}(y):=\int_\om\psi(c^{(j)}(x),d^{(j)}(y(x)),Dy(x))\,\dx,$$
so that $F(c^{(j)},d^{(j)})=P(I^{(j)}(y^{(j)}))$,  By the arguments in the proof of Theorem \ref{exthm} we may assume that $y^{(j)}\weak y$ and $\xi^{(j)}\weak \xi$  in $W^{1,n}(\om,\R^n)$, where  $\xi^{(j)}:=(y^{(j)})^{-1}$ and $\xi=y^{-1}$, and that both sequences converge uniformly. We note that, following the argument in the proof of Lemma \ref{c2conv},  $d^{(j)}(y^{(j)})\det Dy^{(j)}\weak d(y)\det Dy$ in $L^1(\om,\R^m)$. Indeed 
if $\varphi\in C_0^\infty(\om)$ then
\begin{align*}
\lim_{j\to\infty}\int_\om d^{(j)}(y^{(j)}(x))\det Dy^{(j)}(x)\varphi(x)\,dx&=\lim_{j\to\infty}\int_\om d^{(j)}(z)\varphi(\xi^{(j)}(z))\,dz\\
&=\int_\om d(z)\varphi(\xi(z))\,dz\\
&=\int_\om d(y(x))\det Dy(x)\varphi(x)\,dx,
\end{align*}
 so that $d^{j)}(y^{(j)})\det Dy^{(j)}\to d(y)\det Dy$ in the sense of distributions, and hence by (H2) weakly in $L^1(\om,\R^m)$. Therefore, by (H3)$'$ and the arguments in the proof of Theorem \ref{exthm}
\begin{align*}
\liminf_{j\to\infty}F(c^{(j)},d^{(j)})&=P\left(\liminf_{j\to\infty}\int_\om g(c^{(j)},d^{(j)}(y^{(j)})\det Dy^{(j)}, {\mathbf J}(Dy^{(j)}),\det Dy^{(j)})\,dx\right)\\
&\geq P\left(\int_\om g(c,d\det Dy,{\mathbf J}(Dy),\det Dy)\,dx\right)\\
&\geq F(c,d),
\end{align*}
as claimed.
\end{proof}
As an immediate consequence we can prove the existence of a `discrete  morphing sequence' between a pair of images $c,d$.
\begin{theorem}
\label{discrete}Suppose  {\rm (H1),(H2),}{\rm (H3)}$'$ hold. 
Then given $c,d\in L^\infty(\om,K)$ the function
\be
\label{FN}
I_N(c_1,\ldots,c_{N+1}):=\sum_{i=1}^{N}F(c_i,c_{i+1})
\ee
attains a minimum in ${\mathcal C}_N(c,d)$. In particular $F_N(c,d)=0$ iff $c=d$.
\end{theorem}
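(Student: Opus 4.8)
The plan is to apply the direct method of the calculus of variations in the finite-dimensional-looking (but actually infinite-dimensional, since each $c_i$ ranges over $L^\infty(\om,K)$) minimisation over $\mathcal C_N(c,d)$. First I would note that the functional $I_N$ is bounded below by $0$ and finite at some admissible point: for instance the choice of all intermediate $c_i$ equal and using the identity map in $\mA_n$ gives $I_N<\infty$ (indeed, interpolating linearly between $c$ and $d$ as in the discussion before the theorem shows $\inf I_N<\infty$; any convex combination stays in $K$ since $K$ is convex). Let $\cv^{(k)}=(c_1^{(k)},\ldots,c_{N+1}^{(k)})$ be a minimising sequence in $\mathcal C_N(c,d)$, so $c_1^{(k)}=c$ and $c_{N+1}^{(k)}=d$ for all $k$.

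Next I would extract weak-$*$ limits. Since each $c_i^{(k)}$ is bounded in $L^\infty(\om,\R^m)$ (values in the bounded set $K$), and $L^\infty(\om,\R^m)=\bigl(L^1(\om,\R^m)\bigr)^\ast$ is separable's dual, by sequential weak-$*$ compactness (and a diagonal argument over the finitely many indices $i=2,\ldots,N$) we may pass to a subsequence, not relabelled, with $c_i^{(k)}\weakstar c_i$ in $L^\infty(\om,\R^m)$ for each $i$, where $c_1=c$, $c_{N+1}=d$. I would then check that the limit stays admissible, i.e. $c_i(x)\in K$ for a.e.\ $x$: this follows because $K$ is closed and convex, so the set $\{d\in L^\infty(\om,\R^m):d(x)\in K\text{ a.e.}\}$ is convex and strongly closed, hence weak-$*$ closed (being also bounded, Mazur plus a standard argument, or directly: test against $\varphi\,\1_E$ for $\varphi$ in the dual cone of $K$). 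Thus $\cv=(c_1,\ldots,c_{N+1})\in\mathcal C_N(c,d)$.

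Finally I would invoke lower semicontinuity. Applying Lemma \ref{lsc1} to each consecutive pair $(c_i^{(k)},c_{i+1}^{(k)})\weakstar(c_i,c_{i+1})$ gives $F(c_i,c_{i+1})\leq\liminf_{k\to\infty}F(c_i^{(k)},c_{i+1}^{(k)})$ for each $i=1,\ldots,N$. Summing over $i$ and using superadditivity of $\liminf$,
\begin{align*}
I_N(c_1,\ldots,c_{N+1})=\sum_{i=1}^N F(c_i,c_{i+1})\leq\sum_{i=1}^N\liminf_{k\to\infty}F(c_i^{(k)},c_{i+1}^{(k)})\leq\liminf_{k\to\infty}\sum_{i=1}^N F(c_i^{(k)},c_{i+1}^{(k)}),
\end{align*}
which equals $\inf_{\mathcal C_N(c,d)}I_N$. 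Hence $\cv$ is a minimiser.

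The only genuinely nontrivial ingredient is Lemma \ref{lsc1}, which has already been established using hypothesis (H3)$'$; the remaining obstacle to watch is the verification that the weak-$*$ limits retain values in $K$ and that $\liminf$ of a finite sum dominates the sum of the $\liminf$s, both of which are routine. I expect the step requiring the most care to be simply ensuring the hypotheses of Lemma \ref{lsc1} are met for each pair simultaneously, which is handled by the single diagonal subsequence extraction above.
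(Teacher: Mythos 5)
Your proposal is correct and follows essentially the same route as the paper: extract weak-$*$ convergent subsequences of the intermediate intensities (using boundedness and convexity of $K$) and conclude by applying the lower semicontinuity Lemma \ref{lsc1} to each consecutive pair. The paper's proof is just a condensed version of the same argument, with your additional checks (limits remaining $K$-valued, superadditivity of $\liminf$) left implicit.
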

\begin{proof}
Let $(c_1^{(j)},\ldots,c_{N+1}^{(j)})$ be a minimizing sequence for $I_N$ in ${\mathcal C}_N(c,d)$. Since $K$ is bounded and convex we may assume that each $c_i^{(j)}\weakstar c_i$ in $L^\infty(\om,K)$, and it then follows from Lemma \ref{lsc1} that $(c_1,\ldots,c_{N+1})$ is a minimiser.

If $F_N(c,d)=0$ then for a minimiser ${\mathbf c}=(c_1,\ldots,c_{N+1})$ we have $F(c_i, c_{i+1})=0$ for each $i$ and hence $c=c_1=c_2=\cdots c_{N+1}=d$.
\end{proof}

Theorem \ref{discrete} should be compared to that of Berkels, Effland and Rumpf \cite[Theorem 3.4]{BerkelsEfflandRumpf2015}, who consider an energy functional of the type
$$I_{c,d}(\psi)=\int_\om (\Psi(Dy) +\gamma |D^my|^2+\frac{1}{\Delta}|c-d(y)|^2)\,dx,$$
where $m>1+\frac{n}{2}$ and $\Delta>0$. The smoothness induced by the term $\gamma |D^my|^2$,
together with the convexity of the integrand in $c,d$, enable them to prove the existence of  minimisers for each $N$  of 
$$I_N(\cv):=\sum_{i=1}^{N}I_{(c_i,c_{i+1})}(y_i)$$
among discrete paths of images $\cv=(c_1,\ldots,c_N)$,  $c_1=c, c_{N+1}=d$ and corresponding diffeomorphisms ${\mathbf y}=(y_1,\ldots,y_{N})$
  with $y_i(x)=x$ for $x\in\partial\om$.  Theorem \ref{discrete} shows that the higher-order derivative term is not needed in the mass-conserving case, and that we can also allow sliding at the boundary.

Berkels, Effland and Rumpf \cite{BerkelsEfflandRumpf2015} (see also  \cite{NeumayerPerschSteidl2018}, \cite{EfflandNeumayerRumpf2020}, \cite{guastinietal23}) then show how to obtain a metamorphosis model of the type considered by Miller and Younes \cite{MillerYounes} and by Trouv\'e and Younes \cite{TrouveYounes}, \cite{TrouveYounesa} by passing to the limit $N\to\infty$ in the scaled functional $NI_N$. Without the  higher-order gradient term it does not seem obvious  how to pass to this limit, when one would hope to obtain a fluid model with slip boundary conditions.

If $\om$ has a nontrivial Euclidean symmetry (for example, if $\om$ is a ball or  parallelepiped) then we identify equivalent images by the equivalence relation $c\sim d$ iff $c(x)=d(Rx+a)$ for a.e. $x\in\om$ for some $R\in SO(n)$ and $a\in\R^n$ with  $\om=R\om+a$, and let $X$ be the set of corresponding equivalence classes in $L^\infty(\om,K)$. For two such equivalence classes $[c], [d]$ containing $c,d\in L^\infty(\om,K)$ respectively we can define 
\begin{align}\tilde F([c],[d])&=\inf_{\tilde c\in[c], \tilde d\in[d]}F(\tilde c,\tilde d)\nonumber\\
&=\inf_{\substack{R,\bar R\in SO(n), \;a,\bar a\in\R^n\\R\om+a=\bar R\om+\bar a=\om}}P\left(\min_{y\in\mA_n}\int_\om\psi(c(Rx+a),d(\bar Ry(x)+\bar a),Dy(x))\,dx\right).\label{newf}
\end{align}
Clearly $\tilde F([c],[d])=\tilde F([d],[c])\geq 0$. 
We claim that there exist $R^*,\bar R^*\in SO(n),\,a^*, \bar a^*\in \R^n$ with $R^*\om+a^*=\bar R^*\om+\bar a^*=\om$, and $y^*\in\mA_n$ with 
$$\tilde F([c],[d])=P\left(\int_\om \psi(c(R^*x+a^*),d(\bar R^*y^*(x)+\bar a^*),Dy^*(x))\,dx\right).$$
To this end let $R^{(j)}, \bar R^{(j)},a^{(j)},\bar a^{(j)}, y^{(j)}$ be a minimizing sequence for \eqref{newf}, where we may assume that $R^{(j)}\to R^*, \bar R^{(j)}\to \bar R^*, a^{(j)}\to a^*, \bar a^{(j)}\to \bar a^*$, so that
$R^*\om+a^*=\bar R^*\om+\bar a^*=\om$. Furthermore we may suppose that $y^{(j)}\weak y^*$ in $W^{1,n}(\om,\R^n)$ for some $y^*\in\mA$, and we need to show that 
\begin{align*}\int_\om\psi(c(R^*x+a^*),d(\bar R^*y^*(x)+\bar a^*),Dy^*(x))\,dx&\\\ &\hspace{-1.5in}\leq \liminf_{j\to\infty}\int_\om\psi(c(R^{(j)}x+a^{(j)}),d(\bar R^{(j)}y^{(j)}(x)+\bar a^{(j)}),Dy^{(j)}(x))\,dx.
\end{align*}
Following the proof of Theorem \ref{exthm} it suffices to check that (for a subsequence) $c(R^{(j)}x+a^{(j)})\to c(R^*x+a^*)$ and $d(\bar R^{(j)}y^{(j)}(x)+\bar a^{(j)})\to d(\bar R^*y^*(x)+\bar a^*)$ a.e. in $\om$. The first assertion is easily proved (e.g. using the method in Lemma \ref{c2conv} of proving weak and norm convergence in $L^2$), while the second follows from Lemma \ref{c2conv} applied to the map $\tilde y^{(j)}(x)=\bar R^{(j)}y^{(j)}(x)+\bar a^{(j)}$.

Hence if $F([c],[d])=0$ we have that $y^*(x)=\hat R x+\hat a$ for some $\hat R\in SO(n)$, $\hat a\in\R^n$ with $\hat R\om+\hat a=\om$, and thus $c(R^*x+a^*)=d(\bar R^*(\hat Rx+\hat a)+\bar a^*)$, from which it follows that $[c]=[d]$. Then we can define
$$\tilde{\mathcal C}_N([c],[d]):=\{\tilde\cv=([c_1],[c_2],\ldots,[c_{N+1}]):\text{ each }[c_i]\in X, [c_1]=[c], [c_{N+1}]=[d]\}$$
and
\be
\label{tildeFN}
\tilde F_N([c],[d])=\inf_{\tilde\cv\in\tilde{\mathcal C}_N([c],[d])}\sum_{i=1}^N\tilde F([c_i],[c_{i+1}]).
\ee
and applying Lemma \ref{metric} to $\tilde F_N$ we obtain a corresponding pseudometric on $X$. Furthermore, under the hypotheses (H1), (H2), (H3)$'$ we find as in Theorem \ref{FN} that the infimum in \eqref{tildeFN} is attained.

How to handle the case $\om_1\neq \om_2$ is less clear, since it would require considering paths between images for which also the domain interpolates between $\om_1$ and $\om_2$.

\section{Quasiconvexity and affine scaling}
\label{scaling}
\subsection{Affinely related images and a corresponding quasiconvexity condition}
Suppose that the images $P_1=(\Omega_1,c_1)$ and $P_2=(\Omega_2,c_2)$ are affinely related, i.e. for some   $M\in M^{n\times n}_+$ and $a\in\R^n$ we have
\be
\label{lin}\Omega_2=M\Omega_1+a,\;\; c_2(Mx+a)=c_1(x).
\ee
We ask whether  we can choose $\psi$ such that for each such pair of affinely related images the unique minimiser $y$ of $I_{P_1,P_2}$  in $\mA_n(\om_1,\om_2)$ is $y(x)=Mx+a$. Note that without loss of generality we can  assume that $a=0$, since $y$ is a minimiser of $I_{P_1,P_2}$ in $\mA_n(\om_1, M\om_1+a)$ iff $y-a$ is a minimiser of $I_{P_1,\tilde P_2}$ in $\mA_n(\om_1,M\om_1)$, where $\tilde P_2:=(M\om_1,c_2(\cdot+a))$. Therefore we consider below only pairs $P_1=(\om_1,c_1)$ and $P_2=(\om_2,c_2)$ of {\it linearly related} images, i.e.
\be
\label{lina}
\om_2=M\om_1,\;\;c_2(Mx)=c_1(x).
\ee

We consider  $\psi$ of the form \eqref{8aa}, \eqref{8a}, that is
\be
\label{psiform}
\psi(c_1,c_2,A)=\Psi(A)+(1+\det A)g(c_1,c_2),
\ee
where $\Psi:M^{n\times n}_+\to[0,\infty)$ is isotropic with $\Psi(A)=\det A\cdot\Psi(A^{-1})$, $\Psi^{-1}(0)=SO(n)$, and $g:\R^n\times\R^n\to[0,\infty)$ continuous with $g(c_1,c_2)=g(c_2,c_1)$, $g(c_1,c_2)=0$ iff $c_1=c_2$. 
Thus, since $g(c_1(x), c_2(Mx))=0$, we require that  for all   $y\in\mA_n$ with $y(\Omega_1)=M\Omega_1$
\be
\label{Mcond}
\int_{\Omega_1}\left(\Psi(Dy(x))+(1+\det Dy(x))g(c_1(x),c_2(y(x)))\right)\,\dx \geq \displaystyle\int_{\Omega_1}\Psi(M)\,\dx,\;\;
\ee
with equality iff $y(x)=Mx$.

Since $g\geq 0$, a sufficient condition for \eqref{Mcond} to hold is that
\be
\label{qc}\av_{\Omega_1}\Psi(Dy)\,\dx\geq \Psi(M)
\ee
for all $\om_1$ and $y\in\mA_n$, where $\avsmall_{\Omega_1} f\,dx:=\frac{1}{\mL^n(\om_1)}\int_{\om_1}f\,dx$. If we require \eqref{Mcond} to hold for a sufficiently large family of linearly related  image pairs then \eqref{qc} is also necessary. Indeed, given $y\in\mA_n=\mA_n(\om_1,\om_2)$ choose $c:\om_1\to K$ to be a bounded measurable function invariant under the map $M^{-1}y:\om_1\to\om_1$; for example we can take $c$ to be constant, but there can be nonconstant such $c$.  Suppose that \eqref{Mcond} holds for some sequence of image pairs $P_1^{(j)}=(\om_1,c_1^{(j)}), \, P_2^{(j)}=(M\om_1,c_2^{(j)})$, where $c_1^{(j)}:\om_1\to K$,  $c_1^{(j)}(x)\to c(x)$ boundedly a.e. in $\om_1$ and $c^{(j)}_2(z):=c_1^{(j)}(M^{-1}z)$. Then  $c^{(j)}_2(y(x))\to c(M^{-1}y(x))= c(x)$ boundedly a.e. in $\om_1$, so that $g(c^{(j)}_1(x),c^{(j)}_2(x))\to 0$ boundedly a.e. in $\om_1$. Hence passing to the limit $j\to\infty$ in \eqref{Mcond} for $c_1^{(j)},c_2^{(j)}$ we get \eqref{qc} for $y$.

The inequality \eqref{qc} is a stronger version of {\it quasiconvexity at }$M$, in which the usual requirement that $y(x)=Mx$ for $x\in\partial\Omega_1$ is weakened to $y(\om_1)=M\om_1$. The requirement that \eqref{qc} holds for all $M$  and all $y$ with $y(x)=Mx$ for $x\in\partial\Omega_1$ is {\it quasiconvexity}, the central convexity condition of the multi-dimensional calculus of variations, which is implied by polyconvexity (see, e.g. \cite{rindler2018}). (Note, however, that in the calculus of variations literature the quasiconvexity inequality is not usually subject to the extra condition that $y$ is a  homeomorphism. For a recent discussion of this issue see \cite{astalaetal2023}.) 
\subsection{Uniform magnification and rotation}
\label{mag}
We show that we can choose $\Psi$ to satisfy  \eqref{qc} for all $M$ of the form  $M=\lambda R$, $\lambda>0$, $R\in SO(n)$,  so that $P_2$ is a uniform magnification (or reduction if $\lambda\leq 1$) of $P_1$ plus a possible rigid rotation. Let 
\be
\label{isoPsi}
\Psi(A)= \sum_{i=1}^nv_i^\alpha  +
\det A\left(\sum_{i=1}^nv_i^{-\alpha}\right)+h(\det A),
\ee
where $v_i=v_i(A)$ are the singular values of $A$, $\alpha\geq n$, and where $h:(0,\infty)\to\R$ is $C^1$, convex and bounded below with $h(\delta)=\delta h(\delta^{-1})$ and $h'(1)=-n$.
Then $\Psi$ is isotropic, $\Psi(A)=\det A\cdot \Psi(A^{-1})$,
$\Psi\geq 0$, $\Psi^{-1}(0)=SO(n)$, and $\psi$ satisfies (H1)-(H3).
\begin{theorem}
\label{uniformmag}
For $\Psi$ given by \eqref{isoPsi} with the above hypotheses on $\alpha$ and $h$, the inequality \eqref{qc} holds whenever $M=\lambda R$, $\lambda>0$, $R\in SO(n)$.

Suppose further that the genericity condition 
\begin{align}
\label{generic}
&a+\lambda Q\om_1=\lambda \om_1,\;  c_2(a+\lambda Qx)=c_2(\lambda Rx)\text{ for }a\in\R^n,\,Q\in SO(n)\\
&\text{ implies }a=0 \text{ and }Q=R\nonumber
\end{align} 
holds. Then $y(x)=\lambda Rx$ is the unique minimiser of $I_{P_1,P_2}$ in $\mA_n$.
\end{theorem}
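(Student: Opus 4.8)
The plan is to prove the two assertions in turn: \eqref{qc} by an arithmetic--geometric-mean plus Jensen argument, and the uniqueness statement by analysing the equality cases, invoking a rigidity theorem, and finishing with the genericity condition \eqref{generic}.

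For \eqref{qc}, I would first note that by isotropy $\Psi(\lambda R)=\Psi(\lambda\1)=n\lambda^\alpha+n\lambda^{n-\alpha}+h(\lambda^n)$, and that for any $y\in\mA_n$ with $y(\om_1)=\lambda R\om_1$ the change of variables formula \eqref{change} with $\varphi\equiv 1$ (and $N(y,\om_1,\cdot)\equiv 1$ on $\om_2$) gives $\int_{\om_1}\det Dy\,\dx=\mL^n(\lambda R\om_1)=\lambda^n\mL^n(\om_1)$, i.e. $\av_{\om_1}\det Dy=\lambda^n$. Applying AM--GM to the two sums in \eqref{isoPsi} yields the pointwise bound $\Psi(A)\geq\phi(\det A)$ with $\phi(\delta):=n\delta^{\alpha/n}+n\delta^{1-\alpha/n}+h(\delta)$, and $\phi$ is convex on $(0,\infty)$ since $\alpha/n\geq1$, $1-\alpha/n\leq0$ and $h$ is convex. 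Jensen's inequality then gives $\av_{\om_1}\Psi(Dy)\geq\av_{\om_1}\phi(\det Dy)\geq\phi\big(\av_{\om_1}\det Dy\big)=\phi(\lambda^n)=\Psi(\lambda R)$, as required.

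For uniqueness, I would first observe that $y_0(x):=\lambda Rx$ belongs to $\mA_n$ and, since $c_2(\lambda Rx)=c_1(x)$ and $g(c,c)=0$, satisfies $I_{P_1,P_2}(y_0)=\mL^n(\om_1)\Psi(\lambda R)$; combined with \eqref{qc} and $g\geq0$ this shows $y_0$ is an absolute minimiser, of value $\mL^n(\om_1)\Psi(\lambda R)$. Now let $y$ be any minimiser. Then equality must hold throughout: from the registration term $(1+\det Dy)\,g(c_1,c_2(y))\equiv0$, so $c_2(y(x))=c_1(x)$ a.e.; and $\av_{\om_1}\Psi(Dy)=\Psi(\lambda R)$ forces both equality a.e. in $\Psi(Dy(x))\geq\phi(\det Dy(x))$, i.e. equality in AM--GM, whence all singular values of $Dy(x)$ coincide and $Dy(x)=\mu(x)Q(x)$ with $\mu(x)>0$, $Q(x)\in SO(n)$ (orientation-preservation fixing $\det Q(x)=1$), and equality in Jensen for $\phi$, which for $\alpha>n$ (where $\phi$ is strictly convex) forces $\det Dy(x)=\lambda^n$ a.e., hence $\mu(x)\equiv\lambda$; the borderline case $\alpha=n$ needs a small extra input, e.g. strict convexity of $h$ near $\lambda^n$. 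Thus $\lambda^{-1}Dy(x)\in SO(n)$ a.e. with $\lambda^{-1}y\in W^{1,n}(\om_1,\R^n)$, so by the rigidity theorem of Reshetnyak \cite{reshetnyak67a} (already invoked in Section \ref{metricsection}) $Q(x)$ equals a constant $Q\in SO(n)$ and $y(x)=\lambda Qx+b$ for some $b\in\R^n$. Finally $y(\om_1)=\om_2=\lambda R\om_1$ gives $b+\lambda Q\om_1=\lambda R\om_1$, while $c_2(b+\lambda Qx)=c_2(y(x))=c_1(x)=c_2(\lambda Rx)$ a.e., so the genericity condition \eqref{generic} forces $b=0$ and $Q=R$, i.e. $y=y_0$.

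I expect the main obstacle to be this equality analysis: converting the single scalar identity $\av_{\om_1}\Psi(Dy)=\Psi(\lambda R)$ into the pointwise conformality $Dy=\lambda Q(x)$ (carefully tracking equality through both AM--GM and Jensen, and cleanly handling $\alpha=n$), and then upgrading ``$\lambda^{-1}Dy\in SO(n)$ a.e.'' to ``$y$ affine'' via a Liouville/Reshetnyak-type rigidity statement; the genericity condition \eqref{generic} is then only needed at the very end to remove the leftover rotation and translation.
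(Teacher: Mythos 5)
Your proposal is correct and follows essentially the same route as the paper's proof: AM--GM reduces $\Psi(Dy)$ pointwise to a convex function of $\det Dy$, Jensen's inequality together with $\int_{\om_1}\det Dy\,\dx=\lambda^n\mL^n(\om_1)$ gives \eqref{qc}, and for uniqueness the equality case (registration term vanishing, equality in AM--GM giving $Dy=\mu(x)Q(x)$, equality in Jensen giving $\mu\equiv\lambda$) combined with Reshetnyak's rigidity theorem and the genericity condition \eqref{generic} yields $y(x)=\lambda Rx$. Your observation that the borderline case $\alpha=n$ requires some strict convexity of $h$ near $\lambda^n$ for the Jensen equality step is a genuine point of care that the paper's equality analysis passes over without comment.
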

\begin{proof}
Let $y\in\mA_n$. By the arithmetic mean -- geometric mean inequality and the fact that  $\det Dy=\prod_{i=1}^nv_i$, we have that
\begin{eqnarray*}
\av_{\om_1}\Psi(Dy)\,\dx&\geq& \av_{\Omega_1}\left(n\left((\det Dy)^{\frac{\alpha}{n}}+(\det Dy)^{1-\frac{\alpha}{n}}\right)+h(\det Dy)\right)\,\dx\\
&=& \av_{\Omega_1}H(\det Dy(x))\,\dx\\
&\geq & H\left(\av_{\Omega_1} \det Dy(x)\,\dx\right)\\
&=&H(\lambda^n)= \Psi(\lambda{\bf 1}),
\end{eqnarray*}
as required, where we have set $H(\delta):=n(\delta^\frac{\alpha}{n}+\delta^{1-\frac{\alpha}{n}})+h(\delta)$ and used Jensen's inequality, noting that $H$ is convex and that $\int_{\om_1}\det Dy(x)\,\dx=\mL^n(y(\om_1))$.

We have equality in \eqref{qc} only when each $v_i=\lambda$, i.e. $Dy(x)=\lambda Q(x)$ for $Q(x)\in SO(n)$, which implies by \cite{reshetnyak67} that $Q(x)=Q$ is constant and thus $y(x)=a+\lambda Qx$  for some $a\in\R^n$. Thus $y(x)=\lambda x$ is the unique minimiser of $I_{P_1,P_2}$ provided \eqref{generic} holds.
\end{proof}

\subsection{General linearly related images}
\label{general}
In contrast to the case of uniform rotations and magnifications, for \eqref{qc} to hold for general $M$ implies that $\Psi$ has a very special form.

\begin{theorem}\label{generalM}
	Let $\Omega \subset \R^n$ be a bounded domain. If $n>1$ suppose that there exist $n$ points $x_i \in \partial\Omega$ in a neighbourhood of each of which $\partial\Omega$ is $C^1$, and for which the corresponding unit outward normals $\nu_i$  span $\R^n$. Let $\Psi\in C^0(M^{n\times n}_+)$ and 
$$\mA_M:=\{ y:\om\to\R^n: y \text{ an orientation-preserving diffeomorphism of } \om \text{ onto }M\om \}.$$ Then
\be
\label{qcM}
\av_\om\Psi(Dy)\,\dx\geq\Psi(M)
\ee
for all $M\in M^{n\times n}_+$ and all $y\in\mA_M$ if and only if
	\be
\label{convdet}
		\Psi(A) = h(\det A)
\ee
	for some convex $h: (0,\infty) \to \R$.
\end{theorem}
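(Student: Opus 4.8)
The plan is to prove the two implications separately. \emph{Sufficiency} is immediate: if $\Psi=h\circ\det$ with $h$ convex then, for $y\in\mA_M$, the change of variables formula gives $\av_\om\det Dy\,\dx=\mL^n(M\om)/\mL^n(\om)=\det M$, so Jensen's inequality yields $\av_\om\Psi(Dy)\,\dx=\av_\om h(\det Dy)\,\dx\geq h\big(\av_\om\det Dy\,\dx\big)=h(\det M)=\Psi(M)$; this direction uses nothing about $\partial\om$.

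For the converse, assume \eqref{qcM}. The step I expect to be the crux is to reduce to the case $\Psi\in C^\infty$ by smoothing $\Psi$ through convolution against \emph{left translations}. For small $\delta>0$ let $\rho_\delta\geq 0$ be a smooth mollifier on $M^{n\times n}$ supported in $\{S:\|S-\1\|<\delta\}$ and set $\Psi_\delta(M):=\int_{M^{n\times n}}\Psi(SM)\rho_\delta(S)\,dS$. A change of variables $T=SM$ shows $\Psi_\delta\in C^\infty(M^{n\times n}_+)$ and $\Psi_\delta\to\Psi$ locally uniformly as $\delta\to0+$; the key point is that $\Psi_\delta$ inherits \eqref{qcM}, because $y\in\mA_M$ implies $Sy\in\mA_{SM}$ (a diffeomorphism of $\om$ onto $SM\om$), so \eqref{qcM} with $SM$ in place of $M$ gives $\av_\om\Psi(S\,Dy)\,\dx\geq\Psi(SM)$, and integrating against $\rho_\delta$ yields $\av_\om\Psi_\delta(Dy)\,\dx\geq\Psi_\delta(M)$. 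Hence it is enough to prove the implication for smooth $\Psi$: once $\Psi_\delta=h_\delta\circ\det$ with $h_\delta$ convex, letting $\delta\to0+$ forces $\Psi$ to be constant on each level set $\{\det=s\}$ (since $h_\delta(\det A)=\Psi_\delta(A)\to\Psi(A)$ for every $A$ with $\det A=s$), so $\Psi=h\circ\det$ with $h:=\lim_\delta h_\delta$ convex.

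So let $\Psi\in C^\infty$ and construct tangential variations. Fix $M\in M^{n\times n}_+$ and one of the boundary points $x_i$ (this step, and the hypothesis on boundary points, being vacuous for $n=1$). Near $x_i$, $\partial\om$ is $C^1$ with outward unit normal $\nu_i$, so $\partial(M\om)$ is $C^1$ near $Mx_i$ with outward normal parallel to $M^{-\T}\nu_i$. Choose a smooth vector field $V$ supported in a small neighbourhood of $Mx_i$ and tangent there to $\partial(M\om)$; its flow $\Phi^t$ is a diffeomorphism of $\overline{M\om}$ onto itself, so $y_t(x):=\Phi^t(Mx)$ lies in $\mA_M$ and, by \eqref{qcM}, $t\mapsto\av_\om\Psi(Dy_t)\,\dx$ has a minimum at $t=0$. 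Differentiating (legitimate since $\Psi\in C^1$) and changing variables to $M\om$ gives $0=\tfrac{1}{\mL^n(M\om)}\int_{\partial(M\om)}V\cdot\big(D\Psi(M)M^{\T}\big)\mu\,dS$ for all such $V$, whence the tangential part of $\big(D\Psi(M)M^{\T}\big)\mu$ vanishes at $Mx_i$; equivalently $D\Psi(M)\nu_i$ is parallel to $M^{-\T}\nu_i$, for each $i=1,\dots,n$ and every $M$. A short computation with the matrix exponential then gives $\tfrac{d}{dt}\Psi(M_0e^{tF})=0$ whenever $F$ lies in the $(n^2-n)$-dimensional subspace $\mathcal F$ obtained from the zero-diagonal matrices by the conjugation determined by the matrix with columns $\nu_1,\dots,\nu_n$. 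Being the image of the zero-diagonal matrices under a Lie-algebra automorphism, $\mathcal F$ generates the Lie algebra of $\mathrm{SL}(n,\R)$; since $\mathrm{SL}(n,\R)$ is connected, the one-parameter subgroups $\{e^{tF}:F\in\mathcal F\}$ generate it, so $\Psi$ is constant on each coset $M_0\,\mathrm{SL}(n,\R)=\{M:\det M=\det M_0\}$. Therefore $\Psi=h\circ\det$ for some $h\in C^\infty((0,\infty))$.

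Finally, $h$ is convex, with no further rigidity needed. Given $\lambda\in(0,1)$ and $s_1,s_2>0$, put $\delta:=\lambda s_1+(1-\lambda)s_2$ and fix a ball $B\Subset\om$. Let $g$ be a smooth positive function on $\bar\om$ equal to $\delta$ outside $B$ and, inside $B$, a smoothed approximation of the function taking value $s_1$ on a fraction $\lambda$ of $B$ and $s_2$ on the rest; then $\av_\om g\,\dx=\delta$. By the Dacorogna--Moser theorem (applied on $B$, where $g/\delta\equiv1$ near $\partial B$) there is a diffeomorphism $\phi:\om\to\om$, the identity outside $B$, with $\det D\phi=g/\delta$, so $y:=\delta^{1/n}\phi\in\mA_{\delta^{1/n}\1}$ has $\det Dy=g$, and \eqref{qcM} gives $\av_\om h(g)\,\dx\geq h(\delta)$. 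Letting the smoothing vanish yields $\lambda h(s_1)+(1-\lambda)h(s_2)\geq h\big(\lambda s_1+(1-\lambda)s_2\big)$, so $h$ is convex on $(0,\infty)$, completing the proof.
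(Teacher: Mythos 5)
Your sufficiency argument, the left-translation mollification of $\Psi$ (using $y\in\mA_M\Rightarrow Sy\in\mA_{SM}$), the shear/transvection generation of $SL(n,\R)$, and the Dacorogna--Moser construction for the convexity of $h$ (which replaces the paper's mollified radial maps and works equally well) are all fine. The genuine gap is in the step you yourself identify as the crux: the construction of the admissible variations. You require a smooth vector field $V$, supported near $Mx_i$ and tangent to $\partial(M\om)$ along the boundary, whose flow $\Phi^t$ preserves $\overline{M\om}$. Under the stated hypotheses the boundary is only $C^1$ near $x_i$, and such a field need not exist. Concretely, for $n=2$ take a boundary arc through $x_i$ that is the graph of a $C^1$ function whose derivative is nowhere Lipschitz; if $V$ were Lipschitz (a fortiori smooth), nonvanishing at $Mx_i$ and satisfied $V\cdot\mu=0$ along the arc, then $\mu$ would coincide on the arc with the rotation of $V/|V|$, hence be Lipschitz along the arc, forcing the arc to be $C^{1,1}$ --- a contradiction. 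With a merely continuous tangent field the flow is not well defined (and its invariance of $\overline{M\om}$ would itself need justification), so the family $y_t=\Phi^t(M\cdot)$, and with it your first-order condition $D\Psi(M)\nu_i\parallel M^{-\T}\nu_i$, is simply not available. Strengthening the hypothesis to $C^2$ boundary pieces would rescue your construction, but then you prove a strictly weaker theorem than the one stated.

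This is precisely the difficulty the paper's proof is built to avoid: instead of flowing along a tangent field, it uses the explicit graph-adapted sliding maps \eqref{variation}, which translate points by $tp\varphi(x/t)$ in the tangential directions and correct the normal coordinate through the graph function $f$, so that $\om$ is preserved exactly; the resulting error term $e_n\otimes\nabla_xF$ is shown to vanish in the blow-up limit $t\to0$ using only $f\in C^1$, and a suitable choice of $\varphi$ followed by $L\to\infty$ yields the finite shear inequality \eqref{rank1inequality} directly, with no differentiation of $\Psi$ (so the paper also needs neither your smoothing of $\Psi$ nor the limit $\delta\to0$ at the end). If you replace your flow by these sliding maps, or by any construction of boundary-sliding variations valid for $C^1$ boundaries, the rest of your argument --- constancy of $\Psi$ along the shears $\1+tw\otimes\nu_i$, generation of $SL(n,\R)$, and the Dacorogna--Moser convexity step --- does go through.
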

\begin{proof}
	We show that if \eqref{qcM} holds for all $M\in M^{n\times n}_+$ and $y\in\mA_M$ then $\Psi(M)=h(\det M)$ for some continuous $h:(0,\infty) \to \R$. This is immediate if $n=1$, so suppose that  $n>1$.

We first choose coordinates such that $0 \in \partial\Omega$,  $\partial\Omega$ is $C^1$ in some neighbourhood of $0$,  and  the unit outward normal at $0$ is $ e_n = (0,\dots,0,1)$. Then for some $\ep>0$
		\begin{align*}
			\Omega \cap B_\ep = \{x \in B_\ep \,:\, x_n < f(x')\},\\
			\partial\Omega \cap B_\ep = \{x \in B_\ep \,:\, x_n = f(x')\}
		\end{align*}
where $B_\ep=B(0,\ep)$, and
	 $f:\R^{n-1}\to\R$ is a $C^1$ function of $x' \equiv (x_1,x_2,\dots,x_{n-1})$ with $\nabla f(0)=0$. 
	
	Let $ p\in\R^n$ belong to the tangent space of $\partial\om$ at $0$, so that $p \cdot  e_n = 0$ and $p=(p',0)$. Let $\varphi\in C_0^\infty(\R^n)$  satisfy
		\begin{equation}\label{phibound}
			\max_{\R^n} \vert  p \cdot \nabla \varphi( x) \vert < 1.
		\end{equation}
	For $t>0$ sufficiently small and $x\in\R^n$ define
	\begin{equation}\label{variation}
		 x_t( x) =  x + t p \varphi\left(\frac{ x}{t} \right)
		+ \left[f\left( x' + t p'\varphi\left(\frac{ x}{t}\right)\right) - f( x')\right] e_n.
	\end{equation}
 Then $x_t:\R^n\to\R^n$ is $C^1$, and 
\begin{equation}\label{variationderivative}
	D x_t( x) = \1 +  p\otimes\nabla\varphi\left(\frac{ x}{t}\right) +  e_n \otimes \nabla_x F( x,t),
\end{equation}
where $F( x,t) := f\left( x' + t p'\varphi\left(\frac{ x}{t}\right)\right) - f( x')$.
We claim that $x_t:\R^n\to\R^n$ is a diffeomorphism with $ x_t(\Omega) = \Omega$, so that $x_t$ `slides at the boundary' in a small neighbourhood of $0$ in the direction of $p$. To this end we first show that
	\be
\label{limitis0}
\lim_{t \to 0} \max_{x\in\R^n} \vert \nabla_x F( x,t)\vert = 0.
\ee
	Note that $F(x,t) = 0$ for $x \notin t\,\supp \varphi$. We have that 
	\be
		\nabla_{x'}F(x,t) = \nabla f\left(x' +tp'\varphi\left(\frac x t\right)\right) - \nabla f(x') + \nabla f\left(x' +tp'\varphi\left(\frac x t\right)\right)\cdot p'\, \nabla_{x'}\varphi\left(\frac x t\right),
	\ee
	\be
		\frac{\partial F(x,t)}{\partial x_n} = 	\nabla f\left(x' +tp'\varphi\left(\frac x t\right)\right)\cdot p' \varphi_{,n}\left(\frac x t\right).
	\ee
If $x \in t\,\supp \varphi$, then $\vert x' + tp'\varphi\left(\frac x t\right)\vert \leq Ct$ for some $C > 0$, and since $f \in C^1(\R^{n-1})$ with $\nabla f(0) = 0$, we see that 
	\be
		\lim_{t\to 0}\sup_{\vert w \vert \leq Ct}\vert\nabla f (w)\vert = 0,
	\ee
and \eqref{limitis0} follows.

By \eqref{phibound} we have that
\begin{equation}
	\det\left(\1 +  p \otimes \nabla\varphi\left(\frac{ x}{t}\right)\right) = 1 +  p \cdot \nabla \varphi\left(\frac{ x}{t}\right) \geq \delta > 0
\end{equation}
for all $x,t$ and some $\delta$. Now, since 
\begin{equation}
	\det D  x_t( x) =
	\det\left(\1 +  p \otimes \nabla\varphi\left(\frac{ x}{t}\right)\right) \cdot \det(\1 + (\1 +  p \otimes \nabla\varphi\left(\frac{ x}{t}\right))^{-1})( e_n \otimes \nabla_x F( x,t)),
\end{equation}
by \eqref{limitis0}, $\det D  x_t( x) > 0$ for all $x$ if $t$ is sufficiently small. Since $ x_t( x) =  x$ for $\vert x\vert \geq R$ and some $R>0$, by Hadamard's global inverse function theorem (see, for example, \cite{gordon72})  $x_t:\R^n\to\R^n$ is a diffeomorphism. (An alternative argument is to note that $x_t$ is homotopic to the identity in $B_R$ so that for $p\in B_R$ the degree $d(x_t,B_R,p)=1$. Thus $x_t$ is invertible, and therefore a diffeomorphism by the local inverse function theorem.)
By \eqref{variation} 
\begin{gather}
	 x_t'( x) =  x' + t p'\varphi\left(\frac{ x}{t}\right)\\
	( x_t)_n( x) - f( x_t'( x)) = x_n - f( x'), 
\end{gather}
so that $(x',x_n)\in \om\cap B(0,\ep)$ iff $(x_t'(x),(x_t)_n(x))\in\om\cap B(0,\ep)$.  Hence $ x_t(\Omega) = \Omega$ as claimed. 

We now consider the variation of the deformation $ y( x) = M x$ given  by
\begin{equation}\label{yvariation}
	 y_t( x) :=M  x_t( x).
\end{equation}
Then $ y_t\in \mA_M$, and so from \eqref{qcM}, 
\begin{equation}
	\int_{\Omega} \left(\Psi(MD x_t( x)) - \Psi(M)\right) \mathrm{d} x \geq 0.
\end{equation}

Let $ x = t z$.  Denoting by $\chi_{t^{-1}\om}$ the characteristic function of $t^{-1}\om$ we have that
\begin{equation}\label{tineq}
	\int_{\mathrm{supp}\,\varphi}\chi_{t^{-1}\om}(z)
	\left(
	\Psi\left(M(\1 +  p \otimes \nabla \varphi( z)) 
	+  e_n \otimes \nabla_{ x}F(t z,t)\right)
	- \Psi(M)
	\right)
	\mathrm{d} z \geq 0.
\end{equation}
Suppose that $z_n < 0$. Then for $t$ sufficiently small, we have that  $t z \in B_\varepsilon$, and that $tz_n<f(tz)$ since $\lim_{t \to 0} t^{-1}f(t z) = 0$. Hence $t z \in \Omega$, and thus $\lim_{t\to 0} \chi_{t^{-1}\Omega}( z) = 1$. 
 Similarly, if $z_n > 0$ then $\lim_{t\to 0} \chi_{t^{-1}\Omega}(z) = 0$. So we can pass to the limit  $t \to 0$ in \eqref{tineq}  using bounded convergence and Lemma \ref{limitis0} to obtain
\begin{equation}\label{ineq1}
	\int_{\mathrm{supp}\,\varphi \cap \{z_n < 0\}}\left(
	\Psi(M(1 +  p \otimes \nabla \varphi( z))) - \Psi(M) \right){\mathrm d}z
	\geq 0.
\end{equation}
Now suppose that $\varphi \in W^{1,\infty}_0(B_R)$ for large $R$ and that $\esssup_ z \vert  p\cdot\nabla\varphi( z)\vert < 1 $. Let $\rho_\delta$ be a standard mollifier and define $\varphi_\delta:=\rho_\delta*\varphi$. Then for each $x\in\R^n$
\be
|p\cdot\nabla\varphi_\delta(x)|=\left|\int_{\R^n}\rho_\delta(x-z)p\cdot\nabla\varphi(z)\,{\mathrm d}z\right|<\int_{\R^n}\rho_\delta (x-z)\,{\mathrm d}z=1,
\ee
so that passing to the limit $\delta\to 0$ in \eqref{ineq1} for $\varphi_\delta$ we see that \eqref{ineq1} holds also for $\varphi$.

Now we choose 
\[
\varphi( x) := \left(1 - \vert x_n \vert\right)^+h(\vert  x' \vert ),
\]
where $\tau^+ := \max(0,\tau)$ denotes the positive part of $\tau$, and where  $h=
h(r)$ is the piecewise linear cut-off function defined by
$$h(r)=\left\{\begin{array}{ll}1, &|r|\leq L\\1+\frac{L+r}{d},&-(L+d)\leq r\leq -L\\1+\frac{L-r}{d},&L\leq r\leq L+d\\
0,& |r|\geq L+d.\end{array}\right.$$

Then 
\begin{equation}
	\nabla\varphi( x) = 
	\left(
	\nabla_{ x'}h(\vert x'\vert)\left(1 - \vert x_n\vert \right)^+,
	h(\vert x'\vert)\chi_{(-1,1)}(x_n)
	\right)
\end{equation}
so that
\begin{equation}
	\vert \nabla\varphi( x)\cdot  p \vert 
	\leq \vert\nabla_{ x'}h(\vert x'\vert)\vert\vert p\vert
	\leq
	\frac{\vert p\vert}{d} < 1
\end{equation}
for large enough $d$. Therefore from \eqref{ineq1} we have that
\begin{equation}
\begin{aligned}
	0 &\leq \int_{-1}^{0}\int_{\vert x'\vert < L + d}
	\left(
	\Psi(M(1 +  p \otimes \nabla\varphi)) - \Psi(M)
	\right)
	\dx'\,\dx_n\\
	&=
	\left(
	\Psi(M(\1 +  p \otimes  e_n)) - \Psi(M)
	\right)\mathcal{H}^{n-1}(B_L)
	+
	\int_{L+d \geq \vert x'\vert \geq L}
	\left(
	\Psi(M(\1 +  p \otimes \nabla\varphi)) - \Psi(M)
	\right)
	\dx'.
\end{aligned} 
\end{equation}
By construction, the last integral  is bounded up to a constant by $(L+d)^{n-1} - L^{n-1}$. Dividing by $L^{n-1}$ and letting $L \to \infty$, we see that 
\be\label{rank1inequality}
\Psi(M(\1 +  p \otimes  e_n)) \geq \Psi(M).
\ee
Now let $\bar{ x} \in \partial\Omega$ be such that $\partial\Omega$ is $C^1$ in a neighbourhood of $\bar{ x}$, and denote the unit outward normal at $\bar{ x}$ by $\nu$. Let $Q \in SO(n)$, $a \in \R^n$ with 
$
Q\bar{ x} + a = 0, \quad Q\nu =  e_n$,
and make the change of variables
\[
 z = Q x +  a, \quad \tilde{ y}( z) =  y( x)+MQ^Ta.
\]
Then
\[
D\tilde{ y}( z) = D y(x)Q^T, 
\]
so that by \eqref{qcM}, setting $\tilde{\Psi}(A) := \Psi(AQ)$, $\tilde M:=MQ^T$,
\be
	\int_{\Omega}
	(\Psi(Dy(x)) - \Psi(M))\,
	\dx 
	=  \int_{Q\Omega +  a}
	(\tilde{\Psi}(D\tilde{ y}( z)) - \tilde{\Psi}(\tilde M))\,
	{\mathrm d}z
	\geq 0 
\ee
whenever 
\be
\tilde{ y}(Q\Omega+ a) =  \tilde M(Q\Omega +  a).
\ee
Furthermore, $\partial(Q\om+a)$ is $C^1$ in a neighbourhood of $0$, and  $Q\om+a$ has unit outward normal $e_n$ there.
Hence by \eqref{rank1inequality} applied to $\tilde\Psi$ and $\tilde M$, we have that
\be
	\tilde{\Psi}(\tilde M(\1 +  \tilde p\otimes e_n)) \geq \tilde{\Psi}(\tilde M)
\ee
whenever $\tilde p\cdot e_n=0$, or equivalently, setting $p=Q\tilde p$,
\be
	\Psi(M(\1+ p\otimes\nu)) \geq \Psi(M)
\ee
whenever $ p\cdot \nu = 0$.

So, applying this argument to each of the $x_i$  and their corresponding normals  $\nu_i$, we have that
\be
\label{r1ineq}
\Psi(M(\1+ p_i\otimes\nu_i)) \geq \Psi(M)
\ee
for each $i$, whenever $ p_i\cdot\nu_i = 0$. We will use the following lemma, which  allows us to represent arbitrary invertible matrices as products of shears.

\begin{lemma}\label{lemma:rank1generation}
	Let $\nu_1,\ldots,\nu_n$ span $\R^n$. Then $SL(n,\R)=\{A\in M^{n\times n}:\det A=1\}$ is generated by finite products of  matrices of the form $\1 +  p_k\otimes\nu_{i_k}$ with $ p_k\cdot\nu_{i_k} = 0$.
\end{lemma}
\begin{proof}It is well known (see e.g. \cite[Theorem 3.2.10]{robinson82})  that
	$SL(n,\R)$ is generated by transvections, that is matrices of the form 
	\[
	\1 + \alpha  e_i\otimes e_j, \quad \alpha \in \R, \, i \neq j.
	\]
Since the $\nu_i$ span $\R^n$ there exists $A\in M^{n\times n}$, $\det A \neq 0$, with $\nu_i = A e_i$, $1 \leq i \leq n$. Given $C \in SL(n,\R)$ we have that $\det(A^{-T}CA^T) = 1$ and so
\be
\begin{aligned}
	C &= A^T \prod_{k=1}^{N}(\1 + \alpha_k  e_{i_k}\otimes e_{j_k})A^{-T}\\
&=\prod_{k=1}^NA^T(\1+\alpha_ke_{i_k}\otimes e_{j_k})A^{-T}\\
	&= \prod_{k=1}^{N}(\1 + \alpha_k A^T e_{i_k}\otimes A^{-1} e_{j_k})\\
	&= \prod_{k=1}^{N}(\1 + \alpha_k  p_k\otimes \nu_{j_k}),
\end{aligned}
\ee
and $ p_k\cdot\nu_{j_k} =  A^T e_{i_k}\cdot A^{-1} e_{j_k} = 0$.
\end{proof}
Now, let $M, M'\in M^{n\times n}_+$ have the  same determinant $\det M = \det M'$. By Lemma \ref{lemma:rank1generation} we can write 
\be
M^{-1}M' = \prod_{k=1}^{N}(1 +  p_k\otimes\nu_{i_k})
\ee
with $ p_k\cdot\nu_{i_k} = 0$. Hence, using \eqref{r1ineq} successively,
\be
\begin{aligned}
	\Psi(M') &= \Psi(MM^{-1}M')\\
			 &=\Psi\left(M\prod_{k=1}^{N}(1 +  p_k\otimes\nu_{i_k})\right)\\
			 &\geq \Psi\left(M\prod_{k=1}^{N-1}(1 +  p_k\otimes\nu_{i_k})\right)\\
			 &\geq \cdots \geq \Psi(M).
\end{aligned}
\ee
By a symmetric argument $\Psi(M) \geq \Psi(M')$ and hence $\Psi(M) = \Psi(M')$.
 Therefore 
\be\label{convexfcn}
\Psi(M) = h(\det M)
\ee
for some $h:(0,\infty)\to \R$, and 
$h$ is continuous (e.g. by choosing $M = \lambda \1$). 

To show that $h$ is convex for $n\geq 1$ we  modify the proof in \cite[Theorem 4.1]{j26} to handle the slight complication that the maps in $\mA_M$ are required to be diffeomorphisms. Let $B$ be an open ball contained in $\om_1$. Without loss of generality we suppose that $B=B(0,2)$. Let $0<p\leq q<\infty$, $\lambda\in (0,1)$, $m=\lambda p+(1-\lambda)q$, and consider the radial map
$$y(x)=\frac{r(R)}{R}x,\; x\in \R^n,$$
where $R=|x|$, $r(R)>0$ for $R>0$ and
$$r^n(R)=\left\{\begin{array}{ll}
pR^n,&0\leq R\leq\lambda^{1/n},\\
qR^n+\lambda(p-q),&\lambda^{1/n}\leq R\leq 1,\\
mR^n,&1\leq R<\infty.\end{array}\right.$$
Note that  $\det Dy(x)=r'\left(\frac{r(R)}{R}\right)^{n-1}=\frac{d(r^n)}{{\mathrm d}(R^n)}$. Let $s=R^n$ and for sufficiently small $\ep>0$ let $\rho_\ep=\rho_\ep(s)$ be a symmetric mollifier. Define 
$$r_\ep(R):=((\rho_\ep*r^n)(R^n))^{1/n}=\left(\int_\R\rho_\ep(R^n-s)r^n(s^{1/n}){\mathrm d}s\right)^{1/n}$$
 and $y_\ep(x):=\frac{r_\ep(R)}{R}x$.  Then $r_\ep(R)=p_{1/n}R$ for $0\leq R\leq\half\lambda^{1/n}$, $r_\ep(R)=m^{1/n}R$ for $R\geq 3/2$, and  $y_\ep$ is smooth with $\det Dy_\ep(x)>0$, so that (explicitly or by Hadamard's global inverse function theorem) $y_\ep\in\mA_{m^{1/n}\1}$. Also $Dy_\ep(x)$ is uniformly bounded, and $Dy_\ep(x)\to Dy(x)$ a.e. in $\om_1$, so that
\be
\label{convineq}
\lim_{\ep\to 0}\int_{\om_1}h(\det Dy_\ep(x))\,\dx=\int_{\om_1}h(\det Dy(x))\,\dx.
\ee
Hence from \eqref{qcM} we obtain
$$\av_{\om_1}h(\det Dy(x))\,\dx-h(m)=(\lambda h(p)+(1-\lambda)h(q)-h(m))\frac{\mL^n(B(0,1))}{\mL^n(\om_1)}\geq 0,$$
so that $h$ is convex.

Conversely, if \eqref{convexfcn} holds with $h$ convex then \eqref{qcM} follows by Jensen's inequality.
\end{proof}
\begin{remark}\rm We note that the hypothesis on $\om$ in Theorem \ref{generalM} is satisfied if \\
(i) if $\om$ is of class $C^1$, since in this case for any $\nu\in S^{n-1}$ there is a point $x\in\partial\om$ with outward normal $\nu$, \\ (ii) if $\om=\bigcap_{i=1}^N\{x\in\R^n:x\cdot n_i<\alpha_i\}$ is a convex polyhedron, since if the $n_i$ do not span $\R^n$ there exists a unit vector $m$ with $m\cdot n_i=0$ for all $i$, and hence $x+tm\in\om$ for all $x\in\om$ and $t\in\R$, contradicting the boundedness of $\om$.
\end{remark}

\subsection{The mass-preserving case}
\label{massp}
In the case when the intensity of magnified images diminishes in proportion to the Jacobian of the deformation gradient, a different question is appropriate, namely whether we can choose $\psi$ such that  if for some $M\in M^{n\times n}_+$ and $a\in\R^n$ we have
\be
\label{mag1}
\om_2=M\om_1+a,\;c_2(Mx+a)\det M=c_1(x),
\ee
then the unique minimiser $y$ of $I_{P_1,P_2}$ is $y(x)=Mx+a$. In this case it is appropriate to consider $\psi$ of the form \eqref{8aa}, \eqref{8b}, that is 
\be
\label{mass2}
\psi(c_1,c_2,A)=\Psi(A)+(1+(\det A)^{-1})|c_1-c_2\det A|^2,
\ee
where  $\Psi:M^{n\times n}_+\to[0,\infty)$ is isotropic with $\Psi(A)=\det A\cdot\Psi(A^{-1})$, $\Psi^{-1}(0)=SO(n)$. Again we can without loss of generality assume that $a=0$, so that we require that for all $y\in\mA_n$ with $y(\om_1)=M\om_1$ and $c_1,c_2$ with $c_2(Mx)\det M=c_1(x)$
\be
\label{newineq}
\hspace{.3in}\int_{\om_1}\left(\Psi(Dy(x))+(1+(\det Dy(x))^{-1})|c_1(x)-c_2(y(x))\det Dy(x)|^2\right)\,dx\geq\int_{\om_1}\Psi(M)\,dx,
\ee
with equality iff $y(x)=Mx$. Then as before \eqref{qc} is a sufficient condition for \eqref{newineq} to hold. It is also necessary provided we assume that $K=[0,1]^m$ and \eqref{newineq} holds for some sequence $c_1^{(j)}:\om_1\to K$, $c^{(j)}_2(z):= c_1^{(j)}(M^{-1}z)(\det M)^{-1}$ for all $j$ sufficiently large, with $c_1^{(j)}(x)\to 0$ boundedly a.e. in $\om_1$. Choosing $\Psi$ as in \eqref{isoPsi} we therefore have by Theorem \ref{uniformmag} that $y=\lambda Rx$ is a minimiser of $I_{P_1,P_2}$ whenever $M=\lambda R$ with $\lambda>0$, $R\in SO(n)$, and a minor modification of the proof shows that this minimiser is unique under the same nondegeneracy condition \eqref{generic}.

\subsection{Integrands depending on second derivatives}
Theorem \ref{generalM} implies in particular that for \eqref{Mcond} to hold, $\psi$ given by \eqref{psiform} cannot satisfy (H2), so that existence of a minimiser is not guaranteed. (Also $\Psi^{-1}(0)=SO(n)$ cannot hold.) However, by adding a dependence of the integrand on the second derivative $D^2y$ it is possible both to recover existence and retain the property that for linearly related images as in \eqref{lin} the unique minimiser is $y(x)=Mx$. To illustrate this we consider the functional
\be
\nonumber
E_{P_1,P_2}(y):=\int_{\om_1}\left(h(\det Dy)+(1+\det Dy)|c_1(x)-c_2(y(x))|^2+|D^2y(x)|^m\right)\,\dx&&\\&&\hspace{-1in}+\int_{\om_2}|D^2y^{-1}(z)|^m\,{\mathrm d}z,\label{newfunct}
\ee
where $h:(0,\infty)\to [0,\infty)$ is convex with $\delta h(\frac{1}{\delta})=h(\delta)$, $\lim_{\delta\to 0+}h(\delta)=\infty$, and where $m\geq \max(1,\frac{n}{2})$. (In particular we can take $m=2$ for $n\leq 4$.) Note that $E_{P_1,P_2}$ is symmetric with respect to interchanging images.

    As the set of admissible maps we take
\be
\label{Am2}
\mA_{2,m}=\{y\in W^{2,m}(\om_1,\R^n):y :\om_1\to\om_2\text{ an orientation-preserving}&&\\&&\hspace{-1.9in}\text{homeomorphism}, \;y^{-1}\in W^{2,m}(\om_2,\R^n)\}.\nonumber
\ee
A sufficient condition for $\mA_{2,m}$ to be nonempty is that there exists a smooth orientation preserving diffeomorphism between $\bar\om_1$ and $\bar\om_2$. Note that $m\geq\max(1,\frac{n}{2})$ implies that $W^{2,m}(\om_i,\R^n)$ is  continuously embedded in $W^{1,n}(\om_i,\R^n)$ for $i=1,2$.
\begin{theorem}
\label{Dtwo}
Suppose that $\rm (H4)$ and the above conditions on $h$ and $m$ hold, and that $\mA_{2,m}$ is nonempty. Then there exists an absolute minimiser $y^*$ of $E_{P_1,P_2}(y)$ in $\mA_{2,m}$. Furthermore, if $P_1,P_2$ are linearly related as in \eqref{lin} then $y^*(x)=Mx$ is an absolute minimiser, which is unique provided $P_1,P_2$ have no nontrivial affine symmetry in the sense that if $a\in\R^n$ and $A\in M^{n\times n}_+$ with $a+A\om_1=\om_2$ and $c_2(Mx)=c_2(a+Ax)$ a.e then $a=0$ and $A=M$ .
\end{theorem}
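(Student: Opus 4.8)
The plan is to prove the two assertions separately: existence of an absolute minimiser by the direct method, reusing the machinery of Section~\ref{existence}, and then, for linearly related images, the identification of $y^*(x)=Mx$ as the unique minimiser by an elementary convexity argument. Note first that in the linearly related case $Mx\in\mA_{2,m}$ (a smooth diffeomorphism between $\bar\om_1$ and $\bar\om_2=M\bar\om_1$, with smooth inverse), so the hypothesis that $\mA_{2,m}$ is nonempty is automatic, and $\inf_{\mA_{2,m}}E_{P_1,P_2}\leq E_{P_1,P_2}(Mx)=h(\det M)\,\mL^n(\om_1)<\infty$.

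For existence, let $y^{(j)}\in\mA_{2,m}$ be a minimising sequence with inverses $\xi^{(j)}:=(y^{(j)})^{-1}$; as in Theorem~\ref{exthm} we may assume $\inf_{\mA_{2,m}}E_{P_1,P_2}<\infty$. Since $\om_1,\om_2$ are bounded, $y^{(j)},\xi^{(j)}$ are bounded in $L^\infty$, and combined with the bounds on $\int_{\om_1}|D^2y^{(j)}|^m\,\dx$ and $\int_{\om_2}|D^2\xi^{(j)}|^m\,{\mathrm d}z$ this yields, via the Gagliardo--Nirenberg interpolation inequality for the first derivatives, uniform bounds for $y^{(j)}$ in $W^{2,m}(\om_1,\R^n)$ and $\xi^{(j)}$ in $W^{2,m}(\om_2,\R^n)$. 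The symmetry $\delta h(\delta^{-1})=h(\delta)$ forces $h(\delta)/\delta=h(\delta^{-1})\to\infty$ as $\delta\to\infty$; hence $\int_{\om_1}h(\det Dy^{(j)})\,\dx$ is bounded and, by the change of variables formula \eqref{change} together with $h(\delta^{-1})\delta=h(\delta)$, equals $\int_{\om_2}h(\det D\xi^{(j)})\,{\mathrm d}z$. By the de la Vall\'ee Poussin theorem both $\{\det Dy^{(j)}\}$ and $\{\det D\xi^{(j)}\}$ are equiintegrable. Passing to subsequences, $y^{(j)}\weak y$ in $W^{2,m}(\om_1,\R^n)$ and $\xi^{(j)}\weak\xi$ in $W^{2,m}(\om_2,\R^n)$; since $m\geq\max(1,n/2)$ gives the embedding $W^{2,m}\hookrightarrow W^{1,n}$, Theorem~\ref{W1n} and Corollary~\ref{Apequiv} apply to both sequences and give uniform convergence on $\bar\om_1$ and $\bar\om_2$, so that $y:\om_1\to\om_2$ is a homeomorphism with inverse $\xi$ and $y\in\mA_{2,m}$.

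It remains to prove the lower semicontinuity $E_{P_1,P_2}(y)\leq\liminf_j E_{P_1,P_2}(y^{(j)})$. The two second-derivative terms are weakly lower semicontinuous on $W^{2,m}$ because $A\mapsto|A|^m$ is convex; one shows $\det Dy>0$ a.e.\ exactly as in Theorem~\ref{exthm}, using $\lim_{\delta\to 0+}h(\delta)=\infty$ and Fatou's lemma, and then convexity of $h$ together with $\det Dy^{(j)}\weak\det Dy$ in $L^1(\om_1)$ (weak continuity of minors with the equiintegrability above, as in Lemma~\ref{c2conv}) gives $\int_{\om_1}h(\det Dy)\,\dx\leq\liminf_j\int_{\om_1}h(\det Dy^{(j)})\,\dx$. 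For the registration term, Lemma~\ref{c2conv} applies verbatim to give $c_2(y^{(j)}(x))\to c_2(y(x))$ a.e.\ in $\om_1$ along a further subsequence; since $(1+\det Dy^{(j)})\weak(1+\det Dy)$ in $L^1(\om_1)$ while $|c_1-c_2(y^{(j)})|^2$ is bounded in $L^\infty$ and converges a.e., the products $(1+\det Dy^{(j)})|c_1-c_2(y^{(j)})|^2$ converge weakly in $L^1(\om_1)$. Adding the contributions gives the required inequality, and $y$ is an absolute minimiser $y^*$.

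Finally, suppose $P_1,P_2$ are linearly related, $\om_2=M\om_1$ and $c_2(Mx)=c_1(x)$. For $y(x)=Mx$ one has $Dy\equiv M$, $D^2y=0$, $y^{-1}(z)=M^{-1}z$, $D^2y^{-1}=0$ and $c_2(y(x))=c_1(x)$, so $E_{P_1,P_2}(Mx)=h(\det M)\,\mL^n(\om_1)$. For an arbitrary $y\in\mA_{2,m}$, discarding the nonnegative registration and second-derivative terms and using Jensen's inequality for the convex $h$,
\[
E_{P_1,P_2}(y)\ \geq\ \int_{\om_1}h(\det Dy)\,\dx\ \geq\ h\!\left(\av_{\om_1}\det Dy(x)\,\dx\right)\mL^n(\om_1),
\]
and since $\int_{\om_1}\det Dy\,\dx=\mL^n(y(\om_1))=\mL^n(\om_2)=\det M\cdot\mL^n(\om_1)$ by \eqref{change} ($y$ being injective with the $N$-property), the right-hand side equals $h(\det M)\,\mL^n(\om_1)$; thus $y^*(x)=Mx$ is an absolute minimiser. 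If $y\in\mA_{2,m}$ is any minimiser then all of these inequalities are equalities: $\int_{\om_1}|D^2y|^m\,\dx=0$ forces $D^2y=0$ a.e., so $y(x)=Ax+a$ for some $A\in M^{n\times n}_+$, $a\in\R^n$ with $A\om_1+a=\om_2$, the volume identity gives $\det A=\det M$, and vanishing of the registration term (with $1+\det Dy\geq 1$) forces $c_2(Ax+a)=c_1(x)=c_2(Mx)$ a.e.\ in $\om_1$; the no-nontrivial-affine-symmetry hypothesis then yields $a=0$ and $A=M$, i.e.\ $y(x)=Mx$, proving uniqueness. The main difficulty is entirely in the existence half --- checking that the $|D^2y|^m$ and $|D^2y^{-1}|^m$ terms genuinely provide $W^{2,m}$-coercivity, and that the critical embedding $W^{2,m}\hookrightarrow W^{1,n}$ fed into Theorem~\ref{W1n} produces a homeomorphic limit whose inverse again lies in $W^{2,m}$; once the $W^{1,n}$ theory of Section~\ref{existence} is available, the lower semicontinuity and, especially, the minimality and uniqueness of $y^*(x)=Mx$ are short.
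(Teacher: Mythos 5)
Your proposal is correct and follows essentially the same route as the paper: existence by the direct method, reusing Theorem \ref{W1n}/Corollary \ref{Apequiv} after noting that the $W^{2,m}$ bounds (plus $m\geq\max(1,\tfrac n2)$) give $W^{1,n}$ control and that the superlinearity of $h$ at infinity follows from $\delta h(\delta^{-1})=h(\delta)$ and $h(\delta)\to\infty$ as $\delta\to 0+$, with term-by-term lower semicontinuity from convexity; then minimality of $y^*(x)=Mx$ via Jensen's inequality and the volume identity, and uniqueness from $D^2y=0$ a.e.\ plus the no-nontrivial-affine-symmetry hypothesis. This is exactly the paper's argument (which is only sketched there), with the details you supply filling it in correctly.
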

\begin{proof}
The proof of the existence of $y^*$ is an easy modification of that of Theorem \ref{exthm}, noting that minimizing sequences $y^{(j)}$ and their inverses $\xi^{(j)}$ are bounded in $W^{2,m}(\om_1,\R^n)$, $W^{2,m}(\om_2,\R^n)$ and hence in $W^{1,n}(\om_1,\R^n)$  and that the second derivative terms are convex.

If $P_1,P_2$ are linearly related as in \eqref{lin} and $y\in\mA_{2,m}$ then by Jensen's inequality and $\int_{\om_1}\det Dy(x)\,dx=\mL^n(\om_2)=(\det M)\mL^n(\om_1)$
\be
E_{P_1,P_2}(y)\geq \int_{\om_1}h(\det Dy(x))\,\dx\geq \mL^n(\om_1)\,h(\det M)=E_{P_1,P_2}(y^*),
\ee
where $y^*(x)=Mx$. Conversely, if $y^*$ is a minimiser then $D^2y^*(x)=0$ a.e. and hence $y^*(x)=a+Ax$ for some $A\in M^{n\times n}_+$ with $c_2(Mx)=c_2(a+Ax)$. Thus if  $P_1,P_2$ have no nontrivial affine symmetry $y^*(x)=Mx$.
\end{proof}

An analogous result holds in the mass-preserving case.
\section{Remarks on regularity and numerical implementation}
\label{regularitya}
In order to understand some of the issues relating to regularity and rigorous numerical implementation, it is worth recalling the state of the art for the case of pure nonlinear elasticity with no image comparison terms, namely for energy functionals of the form
\be
\label{nle1}
I(y)=\int_\om\Psi(Dy(x))\dx,
\ee
with $\Psi:M^{n\times n}_+\to [0,\infty)$ a sufficiently smooth free-energy function satisfying 
\be
\label{detcondition}\lim_{\det A\to 0+}\Psi(A)=\infty.
\ee
 Although one would like to be able to handle quasiconvex $\Psi$, the existence theory for minimisers of $I$ subject to appropriate boundary conditions is currently restricted to the case of polyconvex $\Psi$ satisfying growth hypotheses such as (H2), with models of natural polymers such as rubber having a form similar to \eqref{isoPsi}. Although a common failure mechanism for such polymers is cavitation (see \cite{j19}), growth conditions such as (H2) do not allow cavitation, and from an experimental perspective it seems reasonable to expect that minimisers are smooth. However this has not been proved for any interesting case if $n\geq 2$ (for a discussion see \cite[Section 2.3]{p31}).  It is not even known if minimisers are locally Lipschitz. Furthermore, despite \eqref{detcondition}, it is a longstanding open problem to decide whether every minimiser $y$ satisfies
\be
\label{unifdet}
\det Dy(x) \geq \mu>0 \text { for a.e. }x\in \om
\ee
for some $\mu$.

When we add image comparison terms the most we can hope for in general as regards the regularity of minimisers is that they are bi-Lipschitz. In fact  consider the case $n=m=1$ and $\om_1=\om_2=(0,1)$  with $\psi$ given by \eqref{8aa}, \eqref{8a} and $g(c_1,c_2)=\ep^{-1}|c_1-c_2|^2$, where we will later  choose $\ep>0$ sufficiently small. Thus we have to minimise
\be
\label{special}I(y)=\int_0^1\left(\ep\Psi(y_x)+(1+y_x)|c_1(x)-c_2(y(x))|^2\right)\,dx
\ee
over $y\in \mA_1:=\{y\in W^{1,1}(0,1): y(0)=0, y(1)=1, y_x>0 \text{ a.e.},\, y^{-1}\in W^{1,1}(0,1)\}$. We assume that $\Psi\geq 0$ is $C^1$ and strictly convex with $\Psi(1)=0$ and  $\lim_{p\to 0+}\Psi(p)=\lim_{p\to\infty}\frac{\Psi(p)}{p}=\infty$. Let us take
$$c_1(x)=\chi_{(0,\half)}(x),\;c_2(x)=\chi_{(0,\frac{3}{4})}(x).$$
Then by Theorem \ref{exthm} there exists a minimiser $y$ of $I$ in $\mA_1$ and $a:=y^{-1}(\frac{3}{4})\in(0,1)$. Suppose first that $a\leq \half$. Then $|c_1(x)-c_2(y(x))|^2=\chi_{(a,\half)}(x)$, so that $y$ is a minimiser of 
$$I(v)=\int_0^1\left(\ep\Psi(v_x)+(1+v_x)\chi_{(a,\half)}(x)\right)\,dx$$
for $v\in \mA_1$ such that $v(a)=y(a),\,v(\half)=y(\half)$. Since $\Psi$ is strictly convex  we have that $y_x$ is constant in each of the intervals $(0,a)$, $(a,\half)$ and $(\half,1)$, and if these constants are all equal then we must have $y(x)=x$. The same argument holds in the case $a\geq\half$. Hence the minimiser $y$ is Lipschitz but not $C^1$ unless $y(x)=x$, when $I(y)= \half$. But choosing 
$$\bar y(x)=\left\{\begin{array}{ll}\frac{3}{2}x,&0\leq x\leq\half,\\ \half(1+x),&\half\leq x\leq 1,
\end{array}\right.$$
 we find that $I(\bar y)=\half\ep\left(\Psi(\frac{3}{2})+\Psi(\half)\right)<\half$ for $\ep>0$ sufficiently small, so that the minimiser is bi-Lipschitz but not $C^1$.

Even in the special case \eqref{special} it does not seem obvious whether every minimiser is bi-Lipschitz for arbitrary $c_1, c_2\in L^\infty(\om)$. However if we assume more regularity, that  $c_1,c_2\in C^1([0,1])$, then any minimiser $y$  is a  $C^1$ diffeomorphism, so that in particular $y_x(x)\geq \mu>0$ for a.e. $x\in(0,1)$. This follows by writing (taking $\ep=1$)
$$I(y)=\int_0^1\left(\Psi(x,y_x)+\theta(x,y)\right)\,dx+\tau,$$
where $\Psi(x,p)=\Psi(p)+(1+p)|c_1(x)|^2$,  $\theta(x,y)=|c_2(y)|^2-2c_1(x)c_2(y)+2c_{1x}(x)C_2(y)$, $C_{2}(y):=\int_0^yc_2(s)\,ds$ and $\tau$ is a constant, and applying the regularity result \cite[Th\'eor\`eme 2]{p10} for integrands having this form.

As for nonlinear elasticity, from a rigorous perspective the formulation of a provably convergent minimization scheme is challenging  due to the possible occurence of the Lavrentiev phenomenon, whereby the infimum of the energy may depend on the function space, this being related to possible unboundedness of $Dy$ or vanishing of $\det Dy$. Such difficulties are compounded for our models due to possible discontinuities in the intensity maps.  Furthermore the minimization is to be carried out in a space of Sobolev homeomorphisms.  In particular, the approximation of Sobolev homeomorphisms by piecewise affine homeomorphisms, such as given by piecewise affine finite elements, is a tricky issue, that has only been resolved for $n=2$ by Iwaniec, Kovalev and Onninen \cite{IwaniecKovalevOnninen}. Some of the  methods for overcoming the Lavrentiev phenomenon that have been proposed and analyzed are described in \cite{p29}. 

These issues are to some extent simplified, but not eliminated, for energy functionals with regularizing higher derivative terms, for which higher order elements are required.  For such models, for example,  it is possible to prove the existence of minimizers satisfying \eqref{unifdet} under suitable growth conditions \cite{palmerhealey}. In second and higher order gradient theories of elasticity higher derivative terms are typically added to represent interfacial energy, often in cases for which the underlying nonlinear elastic energy is not quasiconvex, such as in the modelling of martensitic microstructures (see, for example, \cite{j32,j60}). It is possible that the use of such non quasiconvex energies could be of value for the analysis of images with fine structure, such as those arising in materials science.

\section*{Acknowledgements} We are grateful to Alexander  Belyaev, Jos\'e Iglesias, Tadeusz Iwaniec, David Mumford, Ozan \"{O}ktem, Jani Onninen, Martin Rumpf, Carola Schonlieb, Gabriele Steidl, L\'aszl\'o Sz\'ekelyhidi and  Benedikt Wirth for their interest and helpful suggestions. We are also very grateful to the referees for their careful reading of the paper which led to numerous corrections and improvements. The paper was substantially revised while JMB visited the Hong Kong Institute for Advanced Study. CLH was supported by EPSRC through grant EP/L016508/1. 

 \bibliographystyle{plain}
 
\bibliography{balljourn,ballconfproc,gen2,computervision}

\end{document}